\documentclass[11pt]{amsart}

\usepackage{amsmath,amssymb,amsthm,mathtools}
\mathtoolsset{showonlyrefs}
\usepackage{tikz}
\usepackage[margin=1.15in]{geometry}
\usepackage[colorlinks=true,citecolor=blue,linkcolor=blue,urlcolor=blue]{hyperref}

\numberwithin{equation}{section}

\newtheorem{theorem}{Theorem}[section]
\newtheorem{proposition}[theorem]{Proposition}
\newtheorem{lemma}[theorem]{Lemma}
\newtheorem{corollary}[theorem]{Corollary}
\theoremstyle{definition}

\theoremstyle{plain}
\newtheorem{remark}[theorem]{Remark}
\newcommand{\R}{\mathbb R}
\newcommand{\Sph}{\mathbb S}

\begin{document}
	\title[Alexandrov-Type Rigidity on Rotational Supports]
	{Alexandrov-Type Rigidity for Minimal Capillary Hypersurfaces on Rotational Supports}

	\author{Caiyan Li}
	\address{(C.L.) School of Mathematical Sciences, Xiamen University, 361005, Xiamen, P.~R.~China}
	\email{caiyanli@xmu.edu.cn} 
	\thanks{C.L. is supported by NSFC (Grant No. 12501274).}
	\author{Chao Xia}
	\address{(C.X.) School of Mathematical Sciences, Xiamen University, 361005, Xiamen, P.~R.~China}
	\email{chaoxia@xmu.edu.cn}
	\thanks{C.X. is supported by NSFC (Grant No. 12271449, 12671069, 12526203, 12526102) and the Natural Science Foundation of Fujian Province of China (Grant No. 2024J011008).}
	\date{}
	
	\begin{abstract}
		In this paper, we prove that, under suitable conditions on the generating profile, every connected, compact embedded minimal capillary hypersurface supported on a one-ended rotational hypersurface in the Euclidean space is a horizontal slice. 
		We also construct a smooth one-ended rotational support carrying a non-horizontal planar capillary $n$-ball, showing that the main slope condition cannot in general be omitted. For obtuse contact angles, we further obtain a catenoid-band classification under a reverse profile inequality.
	\end{abstract}
	
	\maketitle

	\section{Introduction}
	
	Alexandrov's celebrated soap bubble theorem says that every closed embedded hypersurface of constant mean curvature in Euclidean space is a round sphere \cite{Alexandrov}. Alexandrov developed the method of moving planes to prove this theorem. An alternative method that is based on a Minkowski integral formula and the sharp Heintze-Karcher inequality was given by Ros \cite{Ros}, which was inspired by Reilly \cite{Reilly}. There are similar works in space forms \cite{Korevaar, MontielRos}. 
	
	Wente \cite{Wente} studied the equilibrium shape of liquid drops and formulated it as capillary surfaces supported on a hyperplane. He showed  the Alexandrov-type rigidity result that any compact embedded hypersurface of constant mean curvature with capillary boundary in a Euclidean half-space is a spherical cap. Recently, Jia, Wang, Zhang and the second author \cite{JiaWangXiaZhangWedge} gave a Ros-type proof of Wente's result, based on sharp Heintze-Karcher inequality for capillary surfaces. This method has been used to prove the Alexandrov-type rigidity result  for capillary hypersurfaces in various settings \cite{ JiaWangXiaZhangAnisotropic, JiaWangXiaZhangJMS, WangXiaZermelo, HuWeiXiaZhou}.
	These studies have in common that the supporting hypersurfaces have sufficiently strong geometry, for example, they are flat or umbilical surfaces in space forms. 
	It is because the methods depend on strong geometric properties of the support. A horizontal hyperplane is preserved by reflection across every vertical hyperplane, while a sphere is umbilical and carries the conformal vector fields needed for the Minkowski formulas. 
	For a support with less symmetry, neither the required moving reflections nor the special boundary identities are generally available. This is therefore natural to ask how much symmetry of the support is actually needed for an Alexandrov-type theorem.

	In the closed case,
	Brendle \cite{Brendle} showed that Alexandrov-type rigidity can still survive when
	strong ambient symmetry can be replaced by a distinguished radial structure. He considered a family of sub-static warped product spaces
	\[
	(M,g)=(N\times[0,\bar r),\,dr^2+h(r)^2g_N),
	\]
	where $N$ is a closed manifold and $h$ is warping factor, and showed that
	any closed embedded CMC hypersurface in such spaces is a slice. See also \cite{BE} for similar result on constant Weingarten curvature surfaces. Li and the second author \cite{LiXia19} established a Reilly-type formula for sub-static manifolds and gave an alternative proof of Brendle's result, see also \cite{BorghiniFogagnolo24}.
	The basic model for sub-static warped product spaces is the Schwarzschild manifold; more generally, the theorem applies to the de Sitter--Schwarzschild and Reissner--Nordstr\"om manifolds. Thus Brendle's theorem does not require a highly symmetric ambient space; it uses only a one-dimensional radial structure determined by $h$.

A related extrinsic model appears in the work of Eichmair--Koerber \cite{EK}. For an asymptotically flat support surface $S\subset\R^3$ with nonnegative mean curvature and an outermost free-boundary minimal surface $D$, they studied free-energy-minimizing minimal capillary surfaces supported on $S$ to prove an extrinsic Penrose conjecture proposed by Huisken \cite[p.~38]{Vol15}. The conjecture states that the exterior mass $m_{ext}$ of $S$ satisfies
\begin{equation}
	m_{ext}\geq\sqrt{\frac{|D|}{\pi}},
	\label{eq}
\end{equation}
an extrinsic analogue of the Riemannian Penrose inequality. The Riemannian Penrose inequality bounds the ADM mass of an asymptotically flat three-manifold with nonnegative scalar curvature from below in terms of the area of its outermost minimal boundary, with equality characterized by the spatial Schwarzschild manifold \cite{HuiskenIlmanen,Bray}. In Huisken's extrinsic setting, $D$ plays the role of the horizon, while the exterior mass is determined by the asymptotic geometry of $S$. Eichmair--Koerber \cite{EK} proved that the associated free-energy mass is nondecreasing as the contact angle increases. Comparing its value at $D$ with its limit at infinity gives \eqref{eq}, and equality holds if and only if the unbounded component of $S\setminus\partial D$ is a half-catenoid. In a further work \cite{EK2}, they have characterized all compact embedded stable minimal capillary surfaces with capillary angle close to either $0$ or $\pi$ that are supported on a complete embedded minimal surface with finite total curvature.  More recently, the first author \cite{LiHorizonFree} proved the extrinsic Penrose inequality without assuming an outermost free-boundary minimal surface; its positive-area equality model is the catenoid.

The above gives a parallel analogue between Schwarzschild space as the intrinsic model and the half-catenoid as the extrinsic model. In this paper, motivated by Brendle's result under radial symmetry and by the equality case of Eichmair--Koerber, we study compact embedded minimal capillary hypersurfaces supported on the half-catenoid. More generally, we consider one-ended rotational supports $r=\rho(z)$ in Euclidean space and give conditions on $\rho$ under which every such hypersurface is a union of horizontal slices. In this extrinsic analogue of Brendle's setting, the profile $\rho$ plays the role of the warping function $h$.

	To state the results, fix $n\geq2$, $0<L\leq\infty$, and a positive function $\rho\in C^2([0,L))$ whose meridian is proper and complete as $z\uparrow L$. Define
	\[
	S_\rho=\{(\rho(z)\omega,z)\in \mathbb{R}^{n+1}:\omega\in\Sph^{n-1},\ 0\leq z<L\},\] 
    \[M_\rho=\{(y,z)\in \mathbb{R}^{n+1}:|y|\leq\rho(z),\ 0\leq z<L\}.
	\]
	We write
	\[
	D_0=\{(y,0):|y|\leq\rho(0)\},\qquad N_{S_\rho}=\frac{(\omega,-\rho')}{\sqrt{1+\rho'^2}},
	\]
	where $D_0$ is the bottom $n$-ball of $M_\rho$ and $N_{S_\rho}$ is the outward unit normal of $M_\rho$ along $S_\rho$. Thus $\partial M_\rho=D_0\cup S_\rho$, $S_\rho$ has one noncompact end, and $M_\rho$ has no upper cap.
	
	Let $\Sigma^n\subset\overline{M_\rho}$ be a smooth, compact, properly embedded minimal \textit{$\theta$-capillary} hypersurface supported on $S_\rho$, with $0<\theta<\pi$ and
	\[
	\Sigma\setminus\partial\Sigma\subset\operatorname{int}M_\rho,\qquad \varnothing\neq\partial\Sigma\subset S_\rho\setminus\partial D_0.
	\]
	Each component of $\Sigma$ is assumed two-sided. Choose a unit normal $\nu_\Sigma$ on each component satisfying
	\begin{equation}
		\langle\nu_\Sigma,N_{S_\rho}\rangle=-\cos\theta.
		\label{eq:angle}
	\end{equation}
	Throughout, homology is taken with $\mathbb Z_2$ coefficients.
	
	For a connected $\Sigma$, let $W_\Sigma\subset\widehat S_\rho:=D_0\cup S_\rho$ be the compact region bounded by $\partial\Sigma$, and let $\Omega_\Sigma\subset M_\rho$ be the bounded region enclosed by $\Sigma\cup W_\Sigma$. See Figure~\ref{fig:geometric-regions}.
	
	\begin{figure}[ht]
		\centering
		\begin{tikzpicture}[x=1.35cm,y=1.02cm,>=stealth,font=\small]
			\fill[black!8]
			(-1,0)
			.. controls (-1.02,0.9) and (-1.18,1.8) .. (-1.4,2.5)
			--(1.4,2.5)
			.. controls (1.18,1.8) and (1.02,0.9) .. (1,0)--cycle;
			\draw[dashed] (0,-0.15)--(0,4.75) node[above] {$z$};
			\draw[line width=0.8pt]
			(1,0).. controls (1.02,0.9) and (1.18,1.8) .. (1.4,2.5)
			.. controls (1.65,3.2) and (1.95,3.9) .. (2.25,4.55);
			\draw[line width=0.8pt]
			(-1,0).. controls (-1.02,0.9) and (-1.18,1.8) .. (-1.4,2.5)
			.. controls (-1.65,3.2) and (-1.95,3.9) .. (-2.25,4.55);
			\draw[line width=0.8pt] (-1,0)--(1,0);
			\draw[line width=1.5pt]
			(-1.4,2.5).. controls (-1.18,1.8) and (-1.02,0.9) .. (-1,0)
			--(1,0).. controls (1.02,0.9) and (1.18,1.8) .. (1.4,2.5);
			\draw[line width=1.2pt] (-1.4,2.5)--(1.4,2.5);
			\fill (-1.4,2.5) circle (1.2pt);
			\fill (1.4,2.5) circle (1.2pt);
			\node[below] at (0,0) {$D_0$};
			\node[right] at (1.88,3.75) {$S_\rho$};
			\node[above] at (0,2.5) {$\Sigma$};
			\node at (0,1.15) {$\Omega_\Sigma$};
			\node[left] at (-1.28,1.18) {$W_\Sigma$};
			\node[right] at (1.28,1.18) {$W_\Sigma$};
			\draw[->] (2.55,2.85) node[right,align=left] {$\widehat S_\rho=D_0\cup S_\rho$}
			-- (1.92,3.45);
		\end{tikzpicture}
		\caption{A meridian plane through the $z$-axis, shown for a horizontal minimal $n$-ball $\Sigma$. The two outer curves form the section of $S_\rho$, and the bottom segment is $D_0$. The thick part of $\widehat S_\rho$ is $W_\Sigma$, and the shaded region is $\Omega_\Sigma$.}
		\label{fig:geometric-regions}
	\end{figure}
	
	Two obstructions must be excluded. First, $\rho'$ may cross the contact-angle slope $-\cot\theta$ from above to below. This is ruled out by
	\begin{equation}
		0<a<b<L,\quad \rho'(a)>-\cot\theta\quad\Longrightarrow\quad \rho'(b)\geq-\cot\theta.
		\label{eq:slope-condition}
	\end{equation}
	Thus, once $\rho'$ exceeds $-\cot\theta$, it cannot subsequently fall below this value.
	
	Second, two horizontal sections of $S_\rho$ may bound a catenoid band, by which we mean a compact annular part of a rotational catenoid bounded by two horizontal spheres. For $\theta\neq\pi/2$, define
	\begin{equation}
		\ell_\theta:=\sup\left\{b\in[0,L):\rho'(b)<-\cot\theta\right\},
		\label{eq:ell-theta}
	\end{equation}
	with the convention that $\ell_\theta=0$ if the set is empty. Define
	\begin{equation}
		A_\theta=\frac{\rho^{n-1}(1+\tan\theta\,\rho')}{\sqrt{1+\rho'^2}},\qquad
		B_\theta=\frac{\rho^{n-1}(1-\tan\theta\,\rho')}{\sqrt{1+\rho'^2}}.
		\label{eq:profile-barriers}
	\end{equation}
	
	We rule out the second obstruction by the strict profile inequality
	\begin{equation}
		A_\theta(b)<B_\theta(a)\qquad(0<a<b<\ell_\theta).
		\label{eq:two-point-barrier}
	\end{equation}
	Our main result is the following Alexandrov-type rigidity theorem.
	
	\begin{theorem}
		\label{thm:intro-disk-rigidity}
		Let $0<\theta<\pi$, and let $\Sigma\subset\overline{M_\rho}$ be a compact embedded minimal $\theta$-capillary hypersurface supported on $S_\rho$. Assume \eqref{eq:slope-condition}.
		\begin{enumerate}
			\item[(i)] Every component $\Sigma_0$ of $\Sigma$ homologous to $D_0$ relative to $S_\rho$ is a horizontal slice.
			\item[(ii)] If $\theta=\pi/2$, every component of $\Sigma$ is a horizontal slice.
			\item[(iii)] If $\theta\neq\pi/2$ and, in addition, \eqref{eq:two-point-barrier} holds, every component of $\Sigma$ is a horizontal slice.
		\end{enumerate}
	\end{theorem}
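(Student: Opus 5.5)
The plan is to use the height function $z$ restricted to $\Sigma$ together with the angle condition \eqref{eq:angle} at the extremal boundary points, and then to close the argument with a flux (divergence-theorem) identity for the vertical field $E_{n+1}$.

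\emph{Step 1: the extremes of $z$ lie on $\partial\Sigma$ and satisfy slope inequalities.} Since $\Sigma$ is minimal, $\Delta_\Sigma z=0$, so on each component $\Sigma_0$ the maximum $z_+$ and minimum $z_-$ of $z$ are attained on $\partial\Sigma_0\subset S_\rho$. Unless $z$ is constant, the Hopf lemma gives $\partial_\mu z>0$ at a maximum point and $\partial_\mu z<0$ at a minimum point, where $\mu$ is the outward conormal.

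At a boundary point, write $\bar\nu$ for the unit conormal of $\partial\Sigma$ in $S_\rho$ pointing away from $W_\Sigma$. The capillary condition gives
\[
\mu=\sin\theta\,N_{S_\rho}+\cos\theta\,\bar\nu
\]
(up to the sign convention fixed by \eqref{eq:angle}). At an extremal height, $\partial\Sigma_0$ is tangent to the horizontal sphere $\{z=z_\pm\}\cap S_\rho$, so $\bar\nu$ is $\pm$ the upward meridian direction $(\rho'\omega,1)/\sqrt{1+\rho'^2}$.

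Taking the $e_{n+1}$-component then gives $\partial_\mu z$ as a multiple of $\mp\sin\theta\,\rho'\pm\cos\theta$, i.e.\ of $\pm(\rho'+\cot\theta)$, with the sign determined by the orientation of $\bar\nu$ at that point. I expect that orientation to be governed by the homology class of $\Sigma_0$. If $\Sigma_0$ is homologous to $D_0$ relative to $S_\rho$, then $W_\Sigma$ lies below $\partial\Sigma_0$ both at the top and at the bottom point. The resulting inequalities should be $\rho'(z_+)\le-\cot\theta$ and $\rho'(z_-)>-\cot\theta$ when $z$ is nonconstant. Carefully bookkeeping these signs is the first technical point.

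\emph{Step 2: conclude (i).} With $z_-<z_+$, the inequality $\rho'(z_-)>-\cot\theta$ together with \eqref{eq:slope-condition} yields $\rho'(z_+)\ge-\cot\theta$. This forces equality, $\rho'(z_+)=-\cot\theta$. To exclude this borderline case, I will use a second-order Hopf-type comparison at the top point, or a sharper argument: sweep horizontal slices downward from above and compare each slice with $\Sigma_0$ via the boundary maximum principle for capillary hypersurfaces (a slice at height $t$ is $\theta$-capillary exactly when $\rho'(t)=-\cot\theta$). Either route yields that $z$ is constant, so $\Sigma_0$ is a horizontal slice. I expect this equality case to be the main obstacle, since the first-order Hopf information degenerates there.

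\emph{Step 3: components not homologous to $D_0$, giving (ii) and (iii).} Such a component bounds, together with an annular region $W\subset S_\rho$ between heights near $z_-$ and $z_+$, a region $\Omega$. Now $\bar\nu$ points the other way at one of the two extremes, and the inequalities become $\rho'(z_-)<-\cot\theta$ and $\rho'(z_+)\ge-\cot\theta$ or similar, so both heights lie in $[0,\ell_\theta]$.

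Next apply the divergence theorem to the vector field $|y|^{1-n}\,y/|y|$-type rotational flux, or more simply to $E_{n+1}$ on $\Omega$, and use $\int_{\partial\Sigma_0}\partial_\mu z=0$. Decompose $\partial\Sigma_0$ near the top and bottom and estimate $\partial_\mu z\,d\sigma$ via the capillary relation. The aim is an inequality comparing $\rho^{n-1}(\sin\theta\pm\cos\theta\,\rho')/\sqrt{1+\rho'^2}$ at the two extreme heights, namely $B_\theta(a)\le A_\theta(b)$ for some $a<b<\ell_\theta$, which contradicts \eqref{eq:two-point-barrier}.

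For $\theta=\pi/2$ we have $A=B=\rho^{n-1}/\sqrt{1+\rho'^2}$, and the condition $\rho'(z_-)<0\le\rho'(z_+)$ together with \eqref{eq:slope-condition} already gives the contradiction directly, without the barrier. The delicate part here is making the flux estimate sharp when the boundary is not a pair of horizontal spheres. For that I would use the rotational (Alexandrov-reflection-free) structure, bounding $\int_{\partial\Sigma_0}$ by its values on the extremal horizontal spheres via the monotonicity of $\rho$ provided by the slope sign on $[0,\ell_\theta]$.
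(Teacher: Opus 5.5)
Your part (i) is essentially the paper's argument, but the ``borderline case'' in Step 2 does not arise. As you say in Step 1, the Hopf lemma is strict at both extremal points. Since $W_\Sigma$ lies on the side of decreasing height at both of them, $\partial_{\mu}z=(-\rho'\sin\theta-\cos\theta)/\sqrt{1+\rho'^2}$ there. This gives $\rho'(z_+)<-\cot\theta<\rho'(z_-)$, which contradicts \eqref{eq:slope-condition} at once, so no slice-sweeping is needed.

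Your sign bookkeeping elsewhere is off, and this breaks (ii). The $N_{S_\rho}$-contribution to $\partial_\mu z$ is always $-\sin\theta\,\rho'/\sqrt{1+\rho'^2}$; only the $\cos\theta$-term depends on which side $W_\Sigma$ lies. For $[\Sigma_0]=0$, $W_\Sigma$ lies below at the top point and above at the bottom point. One therefore gets $\rho'(z_+)<-\cot\theta$ and $\rho'(z_-)>\cot\theta$, not what you wrote. It is $\rho'(z_+)<-\cot\theta$ that places $W_\Sigma$ inside $\{z<\ell_\theta\}$, where \eqref{eq:two-point-barrier} is available. For $\theta=\pi/2$ the $\cos\theta$-term vanishes, so every component, whatever its homology class, has $\rho'(z_+)<0<\rho'(z_-)$. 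That contradicts \eqref{eq:slope-condition}, and this is the paper's proof of (ii). The inequality you state, $\rho'(z_-)<0\le\rho'(z_+)$, is compatible with \eqref{eq:slope-condition}, which only forbids $\rho'$ from falling below $-\cot\theta$ after exceeding it. As written, your (ii) yields no contradiction.

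The genuine gap is in part (iii). Condition \eqref{eq:two-point-barrier} compares values at two heights, but $\partial\Sigma_0$ need not consist of horizontal spheres. There is no direct way to reduce the integrals over a tilted or wiggly $\partial\Sigma_0$ to the two extremal spheres. Monotonicity of $\rho$ is also not the relevant quantity; the barrier does give it on $(0,\ell_\theta)$, since $\tan\theta\,\rho'\le0$ there.

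The paper first turns $\int_{\partial\Sigma_0}\partial_{\mu}z\,ds=0$ into
\[
\int_{W}\Delta_{S_\rho}z\,dA+\tan\theta\int_{\partial W}\frac{\rho'}{\sqrt{1+\rho'^2}}\,ds=0 .
\]
It does this by applying the divergence theorem on $W\subset S_\rho$ to the $\cos\theta$-term; the flux of $e_{n+1}$ through $\Omega$ is not what is needed.

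The missing idea is a calibration on $S_\rho$ that depends only on height:
\begin{enumerate}
\item[(a)] On each component $E$ of $W$, with $\beta=\max_E z$, set $K_-(z)=\max_{z\le s\le\beta}A_\theta(s)$. This function is nonincreasing and, by \eqref{eq:two-point-barrier}, satisfies $A_\theta\le K_-\le B_\theta$.
\item[(b)] With $J=\rho^{n-1}/\sqrt{1+\rho'^2}$, the field $X_-=\frac{1-K_-/J}{-\tan\theta}\nabla^{S_\rho}z$ obeys $|X_-|\le|\rho'|/\sqrt{1+\rho'^2}$. Cauchy--Schwarz then controls the boundary term for any shape of $\partial E$.
\item[(c)] In the interior, $\Delta_{S_\rho}z+\tan\theta\operatorname{div}_{S_\rho}X_-=K_-'/(\rho^{n-1}\sqrt{1+\rho'^2})\le0$.
\end{enumerate}
Together these make the left-hand side of the identity nonpositive on each $E$. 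Strictness comes from the boundary term at a lowest point of $E$ if $\rho'\neq0$ there. Otherwise it comes from $K_-'<0$ on a set of positive measure, via the coarea formula.

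Without some nonincreasing function of height squeezed between $A_\theta$ and $B_\theta$, your Step 3 does not reach a contradiction.
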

	
	The half-catenoid satisfies the hypotheses of Theorem~\ref{thm:intro-disk-rigidity} and gives the following model case.
	
	\begin{corollary}
		\label{cor:half-catenoid}
		Let $0<\theta<\pi$, and let $\Sigma\subset\overline{M_\rho}$ be a compact embedded minimal $\theta$-capillary hypersurface supported on $S_\rho$. If $S_\rho$ is a half-catenoid, then every component of $\Sigma$ is a horizontal slice. If $0<\theta\leq\pi/2$, no such hypersurface exists.
	\end{corollary}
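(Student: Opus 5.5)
The corollary follows from Theorem~\ref{thm:intro-disk-rigidity} once we check that the half-catenoid profile satisfies \eqref{eq:slope-condition} and, for $\theta\neq\pi/2$, \eqref{eq:two-point-barrier}. Nonexistence for $\theta\le\pi/2$ then comes from writing \eqref{eq:angle} out for a horizontal slice.

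\emph{The profile.} The key input is the first integral of the minimal surface equation for rotational hypersurfaces. For the half-catenoid with neck radius $r_0=\rho(0)$ it reads
\[
\frac{\rho^{n-1}}{\sqrt{1+\rho'^2}}\equiv r_0^{\,n-1},\qquad \rho'(0)=0 .
\]
Hence $\rho'=\sqrt{(\rho/r_0)^{2(n-1)}-1}$. This gives $\rho'>0$ on $(0,L)$, so $\rho$ is strictly increasing. Since $\rho'$ is an increasing function of $\rho$, $\rho'$ is nondecreasing in $z$. Here $L=\infty$ for $n=2$ and $L<\infty$ for $n\ge3$; in both cases the meridian is proper and complete as $z\uparrow L$.

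\emph{Checking the hypotheses.} Condition \eqref{eq:slope-condition} holds because $\rho'$ is nondecreasing: if $a<b$ and $\rho'(a)>-\cot\theta$, then $\rho'(b)\ge\rho'(a)>-\cot\theta$. For $\theta=\pi/2$, part (ii) of the theorem applies directly. For $\theta<\pi/2$ we have $-\cot\theta<0\le\rho'$, so the set in \eqref{eq:ell-theta} is empty, $\ell_\theta=0$, and \eqref{eq:two-point-barrier} is vacuous. For $\theta>\pi/2$ we use the first integral to write
\[
A_\theta=r_0^{\,n-1}\bigl(1+\tan\theta\,\rho'\bigr),\qquad B_\theta=r_0^{\,n-1}\bigl(1-\tan\theta\,\rho'\bigr).
\]
Then $A_\theta(b)<B_\theta(a)$ is equivalent to $\tan\theta\,(\rho'(a)+\rho'(b))<0$. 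This holds for $0<a<b$ because $\tan\theta<0$ and $\rho'(b)>0$. Part (iii) therefore applies, and in every case each component of $\Sigma$ is a horizontal slice $\{z=h\}\cap M_\rho$. Moreover $h>0$, since $\partial\Sigma\subset S_\rho\setminus\partial D_0$.

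\emph{Nonexistence for $\theta\le\pi/2$.} Along $\partial\Sigma$ at height $h$, the normal of the slice is $\pm e_{n+1}$, and $\langle e_{n+1},N_{S_\rho}\rangle=-\rho'(h)/\sqrt{1+\rho'(h)^2}$. The one delicate point is the orientation. For a slice, flipping $\nu_\Sigma$ flips the sign of $\cos\theta$. The convention in \eqref{eq:angle} must therefore be tied to the region $\Omega_\Sigma$: the contact angle is measured inside $\Omega_\Sigma$, between $\Sigma$ and $W_\Sigma$. Since $\Omega_\Sigma$ lies below the slice (Figure~\ref{fig:geometric-regions}), this means $\nu_\Sigma$ points into $\Omega_\Sigma$, i.e.\ $\nu_\Sigma=-e_{n+1}$. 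I would confirm this against the paper's setup. With that convention, \eqref{eq:angle} becomes
\[
\cos\theta=-\frac{\rho'(h)}{\sqrt{1+\rho'(h)^2}}<0,
\]
because $\rho'(h)>0$ for $h>0$. This forces $\theta>\pi/2$, so no such hypersurface exists for $0<\theta\le\pi/2$. The case $\theta=\pi/2$ is also excluded, since it would require $\rho'(h)=0$, i.e.\ $h=0$. Conversely, for each $\theta>\pi/2$ the slice at the unique height where $\rho'=-\cot\theta$ is an example, so the dichotomy is sharp.
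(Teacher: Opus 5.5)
Your proof that every component is a horizontal slice is correct, but it takes a different route from the paper. You use the first integral $J\equiv r_0^{\,n-1}$ to verify \eqref{eq:slope-condition} and \eqref{eq:two-point-barrier}; this is the case $J'\equiv0$ of Proposition~\ref{prop:angle-uniform-limit}. You then quote Theorem~\ref{thm:intro-disk-rigidity}(ii)/(iii). For $\theta\neq\pi/2$ your argument therefore runs through Proposition~\ref{prop:automatic-separation}, and for $\theta>\pi/2$ through its comparison field $X_-$. The paper avoids that machinery. Since the half-catenoid is minimal, $\Delta_{S_\rho}z=0$. Proposition~\ref{prop:integral-identity}, applied to a null-homologous component, then reduces to $\tan\theta\int_{\partial W_{\Sigma_0}}\rho'(1+\rho'^2)^{-1/2}\,ds=0$, which is impossible because $\rho'>0$. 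Proposition~\ref{prop:bottom-disk-rigidity} finishes the argument. Your route is a literal check of the hypotheses; the paper's is shorter and uses only the minimality of the support.

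The gap is in the nonexistence part, exactly at the point you flagged. You assert that $\theta$ is the angle inside $\Omega_\Sigma$ between $\Sigma$ and $W_\Sigma$, and infer from this that $\nu_\Sigma$ points into $\Omega_\Sigma$. Under \eqref{eq:angle} this inference is backwards.

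If $\nu_\Sigma$ points into $\Omega_\Sigma$, then \eqref{eq:normal-conormal-relations} holds, and the opening angle of $\Omega_\Sigma$ along $\partial\Sigma$ has cosine $\langle-\mu_\Sigma,\mu_{S_\rho}\rangle=-\cos\theta$. So that angle is $\pi-\theta$, not $\theta$. Conversely, if $\theta$ really were the angle inside $\Omega_\Sigma$, then $\nu_\Sigma$ would have to point out of $\Omega_\Sigma$, i.e.\ $\nu_\Sigma=+e_{n+1}$. Your computation would then give $\cos\theta=\rho'(h)/\sqrt{1+\rho'(h)^2}>0$, the opposite conclusion. Figure~\ref{fig:boundary-vectors} shows this: $\theta$ is obtuse there while the wedge of $\Omega_\Sigma$ at $p$ is acute. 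Your final sign is right only because the two errors cancel.

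The correct justification is the paper's convention \eqref{eq:normal-conormal-relations}: $\nu_\Sigma=-\cos\theta\,N_{S_\rho}+\sin\theta\,\mu_{S_\rho}$, where $\mu_{S_\rho}$ points into $W_\Sigma$. At $\partial D_h$ this means $\mu_{S_\rho}$ points downward along $S_\rho$, because $D_0\subset W_\Sigma$. Since $\sin\theta>0$, this forces $\nu_\Sigma=-e_{n+1}$. Thus $\theta$ is measured between $\Sigma$ and the unwetted part $S_\rho\setminus W_\Sigma$.

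Equivalently, as in the paper, use the last line of the proof of Proposition~\ref{prop:bottom-disk-rigidity}. On a slice $\mu_\Sigma$ is horizontal, so $\partial_{\mu_\Sigma}z=(-\rho'\sin\theta-\cos\theta)/\sqrt{1+\rho'^2}=0$, i.e.\ $\rho'(h)=-\cot\theta$. When $\theta\le\pi/2$ this gives $\rho'(h)\le0$, contradicting $\rho'(h)>0$ for $h>0$.
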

	
	The next remark explains the roles of the two conditions in Theorem~\ref{thm:intro-disk-rigidity} and the counterexamples obtained when they are removed.
	
	\begin{remark}
		Theorem~\ref{thm:slope-condition-sharpness} shows that \eqref{eq:slope-condition} cannot in general be omitted from any part of
		Theorem~\ref{thm:intro-disk-rigidity}. For every $0<\theta<\pi$, it constructs a rotational support on which \eqref{eq:slope-condition} fails and which carries a non-horizontal planar $\theta$-capillary ball homologous to $D_0$. This gives a counterexample to part~(i), and for $\theta=\pi/2$ also to part~(ii). When $\theta\neq\pi/2$, the support can also be chosen to satisfy \eqref{eq:two-point-barrier}, giving a counterexample to part~(iii).
		
		Condition \eqref{eq:two-point-barrier} rules out catenoid bands and forces every component of $\Sigma$ to be homologous to $D_0$; see Proposition~\ref{prop:automatic-separation}. The boundary heights $a<b$ of a catenoid band satisfy
		\[
		A_\theta(b)=B_\theta(a),
		\]
		which is excluded by \eqref{eq:two-point-barrier}. For every $\pi/2<\theta<\pi$, Proposition~\ref{prop:profile-equality-nonrigid} constructs a support satisfying \eqref{eq:slope-condition} and carrying a catenoid band for which this equality holds. Hence \eqref{eq:two-point-barrier} cannot in general be omitted from part~(iii).
	\end{remark}
	
	For obtuse contact angles, the reverse profile inequality leads instead to catenoid-band rigidity. Assume
	\begin{equation}
		A_\theta(b)\geq B_\theta(a)\qquad(0<a<b<L).
		\label{eq:main-nonnegative-profile}
	\end{equation}
	We exclude cylindrical intervals by requiring
	\begin{equation}
		\rho'\not\equiv0\qquad\text{on any interval in }(0,\ell_\theta).
		\label{eq:no-cylinder-below-ell}
	\end{equation}
	Under these assumptions, we obtain the following theorem.
	
	\begin{theorem}
		\label{thm:intro-band-rigidity}
		Let $\pi/2<\theta<\pi$, and let $\Sigma\subset\overline{M_\rho}$ be a compact embedded minimal $\theta$-capillary hypersurface supported on $S_\rho$. Assume \eqref{eq:main-nonnegative-profile} and \eqref{eq:no-cylinder-below-ell}. Then every component of $\Sigma$ is a catenoid band.
	\end{theorem}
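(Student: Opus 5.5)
Throughout, $\Sigma_0$ denotes one component of $\Sigma$, and $e_{n+1}$ is the vertical unit vector. The plan is a flux (balancing) argument for minimal hypersurfaces, using the vertical Killing field $e_{n+1}$ together with the rotational structure, combined with a maximum-principle comparison against catenoid bands. Since $\Sigma_0$ is minimal, $\operatorname{div}_\Sigma e_{n+1}^T=0$. First I would split according to homology, as in Theorem~\ref{thm:intro-disk-rigidity}.

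\emph{Case 1: $\Sigma_0$ is homologous to $D_0$ relative to $S_\rho$.} Integrating $\operatorname{div}_\Sigma e_{n+1}^T=0$ over $\Sigma_0$ and applying the divergence theorem on $\Omega_{\Sigma}$ gives a flux identity. This identity equates the flux of $e_{n+1}$ through $\partial\Sigma_0$ (computed via the conormal, which the contact angle condition \eqref{eq:angle} expresses in terms of $N_{S_\rho}$ and $\rho'$) with $|D_0|$ plus a contribution from $W_\Sigma$. The key pointwise fact is that along $\partial\Sigma_0$ at height $z$, the $e_{n+1}$-component of the outward conormal, weighted against the circle size, is bounded above by a quantity proportional to $A_\theta(z)$ or $B_\theta(z)$ (with $\theta>\pi/2$ the signs flip relative to the acute case). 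Applying the isoperimetric-type comparison with the horizontal slices through the highest and lowest boundary points, I expect to reach $A_\theta(b)< B_\theta(a)$ for $a<b$ unless equality holds throughout. If $\Sigma_0$ were a horizontal slice at height $c$, capillarity would force $\rho'(c)=-\cot\theta$, which requires $c\le\ell_\theta$. I would exclude this by combining \eqref{eq:main-nonnegative-profile} with \eqref{eq:no-cylinder-below-ell}: the slope $-\cot\theta>0$ must be attained on a set that \eqref{eq:main-nonnegative-profile} turns into a flat (cylindrical) piece, contradicting \eqref{eq:no-cylinder-below-ell}. So Case 1 cannot occur.

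\emph{Case 2: $\Sigma_0$ is null-homologous.} Then $\partial\Sigma_0$ bounds an annular region $W\subset S_\rho$ lying between heights $a=\min_{\partial\Sigma_0} z$ and $b=\max_{\partial\Sigma_0} z$. Integrating over the region enclosed by $\Sigma_0$ and $W$, with zero vertical flux, gives
\[
\int_{\partial\Sigma_0}\langle \mu,e_{n+1}\rangle=0,
\]
where $\mu$ is the conormal. I would bound the top-part contribution by $A_\theta(b)$ and the bottom-part contribution by $B_\theta(a)$, both up to the factor $|\Sph^{n-1}|$, using that the $e_{n+1}$-component of the conormal is controlled pointwise by the angle data. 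This yields $A_\theta(b)\le B_\theta(a)$. Combined with \eqref{eq:main-nonnegative-profile}, equality must hold, so every pointwise inequality is sharp. That sharpness forces $\partial\Sigma_0$ to consist of two horizontal spheres at heights $a$ and $b$, with the conormal meridional along them.

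\emph{Conclusion of Case 2.} With boundary data consisting of two coaxial spheres and a rotationally symmetric conormal, I would use Cauchy--Kovalevskaya/unique continuation (or Alexandrov reflection across vertical hyperplanes through the axis) to deduce that $\Sigma_0$ is rotationally symmetric. It is then a minimal rotational annulus, hence a catenoid band.

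The main obstacle is the pointwise estimate bounding the boundary vertical flux by $A_\theta$ and $B_\theta$ evaluated at the extreme heights. Where $\rho'$ oscillates, this is not monotone in $z$. I expect to need a foliation or maximum-principle comparison using horizontal slices, or catenoid bands touching $\Sigma_0$ at the highest and lowest boundary points, with the Hopf boundary lemma applied at those contact points. I would try that comparison first.
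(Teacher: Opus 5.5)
There is a genuine gap in both cases.

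\textbf{The class $[D_0]$.} You miss the elementary consequence of \eqref{eq:main-nonnegative-profile} on which the whole proof rests. Letting $b\downarrow a$ gives $0\le A_\theta(a)-B_\theta(a)=2J(a)\tan\theta\,\rho'(a)$, with $J=\rho^{n-1}/\sqrt{1+\rho'^2}$. Since $\tan\theta<0$, this forces $\rho'\le0$ on $(0,L)$. Hence $\rho'<-\cot\theta$ everywhere, $\ell_\theta=L$, and \eqref{eq:slope-condition} holds vacuously. Proposition~\ref{prop:bottom-disk-rigidity} then shows that any component with $D_0\subset W_{\Sigma_0}$ is a slice $D_h$ with $\rho'(h)=-\cot\theta>0$, which is impossible. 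Your flux/isoperimetric sketch never actually shows that such a component is a slice. Your reason for excluding slices is also wrong: the slope $-\cot\theta$ is simply never attained, nothing is ``turned into'' a cylindrical piece, and \eqref{eq:no-cylinder-below-ell} plays no role in this case.

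\textbf{The null-homologous case.} The step you flag as the main obstacle is exactly the missing idea, and the pointwise route cannot supply it. In $\int_{\partial\Sigma_0}\langle\mu_\Sigma,e_{n+1}\rangle\,ds=0$ the integrand is $-\sin\theta\,\rho'/\sqrt{1+\rho'^2}+\cos\theta\,\langle\mu_{S_\rho},e_{n+1}\rangle$. The second term depends on the unknown direction of $\partial\Sigma_0$ inside $S_\rho$. Pointwise bounds integrated over $\partial\Sigma_0$ can never produce $|\Sph^{n-1}|A_\theta(b)$ or $|\Sph^{n-1}|B_\theta(a)$, because nothing controls how much of $\partial\Sigma_0$ lies near a given height. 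Nor need $W_{\Sigma_0}$ be an annulus between heights $a$ and $b$. Applying the Hopf lemma to $z$ at the extreme boundary points only gives $\cot\theta<\rho'(a)$ and $\rho'(b)<-\cot\theta$, both compatible with $\rho'\le0$.

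What works is the following:
\begin{enumerate}
\item Convert the $\cos\theta$-term into $\int_{W_{\Sigma_0}}\Delta_{S_\rho}z\,dA$ by the divergence theorem on the wetted region (Proposition~\ref{prop:integral-identity}).
\item On each component $E$ of $W_{\Sigma_0}$, with $\alpha=\min_E z$, use the auxiliary field $X_+=\frac{1-K_+/J}{-\tan\theta}\nabla^{S_\rho}z$, where $K_+(z)=\max_{\alpha\le s\le z}B_\theta(s)$.
\item The bounds $B_\theta\le K_+\le A_\theta$ use \eqref{eq:main-nonnegative-profile} at every intermediate pair of heights, not only at the extremes. They give $|X_+|\le-\rho'/\sqrt{1+\rho'^2}$ and $\Delta_{S_\rho}z+\tan\theta\operatorname{div}_{S_\rho}X_+=K_+'/(\rho^{n-1}\sqrt{1+\rho'^2})\ge0$.
\item Hence each $E$ contributes a nonnegative amount to a total that vanishes. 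This is the correct form of your ``equality must hold'' step.
\end{enumerate}

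Even then, sharpness does not by itself make $\partial\Sigma_0$ horizontal. Equality forces $\mu_E\parallel\nabla^{S_\rho}z$ only where $\rho'\neq0$, since the boundary integrand vanishes wherever $\rho'=0$, regardless of the direction of $\partial E$. One needs Sard's theorem on each boundary component together with \eqref{eq:no-cylinder-below-ell} (recall $\ell_\theta=L$). This is where that hypothesis really enters. Your final unique-continuation step is fine and matches Lemma~\ref{lem:horizontal-boundary-catenoid}.
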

	
	Because the support has less symmetry, neither Alexandrov's moving-plane method nor Ros's integral method applies directly.
	The height function is harmonic on every minimal component of $\Sigma$. The Hopf boundary lemma and the contact-angle condition convert its boundary extrema into inequalities for $\rho'$, and \eqref{eq:slope-condition} then forces the height to be constant when the wetted region contains $D_0$. For a component $\Sigma_0$ of $\Sigma$ whose wetted region lies in $S_\rho$, integrating the height equation gives an identity on the wetted region. A vector field on $S_\rho$ depending only on the height allows the profile inequalities to determine the signs in this identity. The strict inequality \eqref{eq:two-point-barrier} gives a contradiction. Under the reverse inequality \eqref{eq:main-nonnegative-profile}, the identity instead forces every component of $\partial\Sigma_0$ to be a horizontal sphere. Unique continuation then makes $\Sigma_0$ rotational, and the minimality equation identifies it as a catenoid band.
	
	The paper is organized as follows. Section~\ref{sec:height-rigidity} proves
	Theorem~\ref{thm:intro-disk-rigidity} and
	Corollary~\ref{cor:half-catenoid}, and constructs counterexamples showing
	that the conclusion may fail without \eqref{eq:slope-condition}. Section~\ref{sec:obtuse-classification} proves Theorem~\ref{thm:intro-band-rigidity}. Appendix~\ref{app:equality-model} constructs a catenoid band showing that \eqref{eq:two-point-barrier} cannot in general be omitted. Appendix~\ref{app:angle-uniform} characterizes the support profiles for which \eqref{eq:two-point-barrier} holds for every obtuse contact angle.

  \medskip
\subsection*{Acknowledgments}
The authors thank Thomas Koerber for his interest in this work.

 \medskip
\noindent\textbf{Disclosure on AI assistance.}
The authors used AI-assisted tools, principally ChatGPT 5.6. The authors verified and completed all mathematical arguments, and take full responsibility for its content.
	
	\section{Alexandrov-type rigidity}
	\label{sec:height-rigidity}
	
	This section proves Theorem~\ref{thm:intro-disk-rigidity} and
	Corollary~\ref{cor:half-catenoid}, and shows that
	\eqref{eq:slope-condition} cannot in general be omitted.  The proof combines the height argument, an integral identity on the wetted region, and \eqref{eq:two-point-barrier}.
	
	Set $\widehat S_\rho=D_0\cup S_\rho$, and let $\Sigma\subset\overline{M_\rho}$ be a connected compact embedded minimal $\theta$-capillary hypersurface supported on $S_\rho$. Identify $\widehat S_\rho$ with $\R^n$. Each component of $\partial\Sigma$ separates $\widehat S_\rho$ into a bounded and an unbounded region. The closure of the points lying in an odd number of these bounded regions is the unique compact regular region $W_\Sigma\subset\widehat S_\rho$ with boundary $\partial\Sigma$. Likewise, $\Sigma\cup W_\Sigma$ separates $M_\rho$, and its bounded side is denoted by $\Omega_\Sigma$.
	
	Since $D_0$ is connected and disjoint from $\partial\Sigma$, either $D_0\subset W_\Sigma$ or $W_\Sigma\subset S_\rho$. Moreover, over $\mathbb Z_2$,
	\[
	\partial[\Omega_\Sigma]=[\Sigma]+[W_\Sigma].
	\]
	Thus
	\[
	D_0\subset W_\Sigma\quad\Longleftrightarrow\quad[\Sigma]=[D_0],
	\qquad
	W_\Sigma\subset S_\rho\quad\Longleftrightarrow\quad[\Sigma]=0.
	\]
	No orientation choice enters these homology classes because all coefficients are in $\mathbb Z_2$.
	
	Let $\mu_\Sigma$ be the unit conormal of $\partial\Sigma$ in $\Sigma$ pointing away from the interior of $\Sigma$. Let $\mu_{W_\Sigma}$ be the unit conormal of $\partial W_\Sigma$ in $W_\Sigma$ pointing from $W_\Sigma$ into $\widehat S_\rho\setminus W_\Sigma$, and set
	$\mu_{S_\rho}:=-\mu_{W_\Sigma}$. 
	Thus $\mu_{S_\rho}$ points from $\partial\Sigma$ into $W_\Sigma$ along $\widehat S_\rho$. When $\theta=\pi/2$, choose the sign of $\nu_\Sigma$ so that $\nu_\Sigma=\mu_{S_\rho}$ along $\partial\Sigma$. With these choices,
	\begin{equation}
		\begin{aligned}
			\mu_\Sigma&=\sin\theta\,N_{S_\rho}+\cos\theta\,\mu_{S_\rho},\\
			\nu_\Sigma&=-\cos\theta\,N_{S_\rho}+\sin\theta\,\mu_{S_\rho}.
		\end{aligned}
		\label{eq:normal-conormal-relations}
	\end{equation}
	
	Figure~\ref{fig:boundary-vectors} illustrates these orientation conventions in the normal plane to $\partial\Sigma$.
	
	\begin{figure}[ht]
		\centering
		\begin{tikzpicture}[x=1.15cm,y=1.05cm,>=stealth,font=\small]
			\fill[black!7]
			(-1.05,0)
			.. controls (-1.22,1.05) and (-1.65,2.95) .. (-2.25,4.45)
			arc[start angle=180,end angle=0,x radius=2.25,y radius=0.55]
			.. controls (1.65,2.95) and (1.22,1.05) .. (1.05,0)
			arc[start angle=0,end angle=-180,x radius=1.05,y radius=0.25]--cycle;
			\draw[black!60]
			(-1.05,0).. controls (-1.22,1.05) and (-1.65,2.95) .. (-2.25,4.45);
			\draw[black!60]
			(1.05,0).. controls (1.22,1.05) and (1.65,2.95) .. (2.25,4.45);
			\draw[dashed,black!45] (-2.25,4.45)
			arc[start angle=180,end angle=0,x radius=2.25,y radius=0.55];
			\draw[black!60] (-2.25,4.45)
			arc[start angle=180,end angle=360,x radius=2.25,y radius=0.55];
			\draw[dashed,black!45] (-1.05,0)
			arc[start angle=180,end angle=0,x radius=1.05,y radius=0.25];
			\draw[black!60] (-1.05,0)
			arc[start angle=180,end angle=360,x radius=1.05,y radius=0.25];
			\draw[black!30,dashed] (-1.28,1.1)
			arc[start angle=180,end angle=0,x radius=1.28,y radius=0.31];
			\draw[black!40] (-1.28,1.1)
			arc[start angle=180,end angle=360,x radius=1.28,y radius=0.31];
			\draw[black!30,dashed] (-1.88,3.3)
			arc[start angle=180,end angle=0,x radius=1.88,y radius=0.47];
			\draw[black!40] (-1.88,3.3)
			arc[start angle=180,end angle=360,x radius=1.88,y radius=0.47];
			\fill[black!22] (0,2.15) ellipse[x radius=1.55,y radius=0.38];
			\draw[line width=1pt] (0,2.15) ellipse[x radius=1.55,y radius=0.38];
			\node at (0,2.15) {$\Sigma$};
			\node[left] at (-1.65,3.72) {$S_\rho$};
			\node[left] at (-1.16,0.78) {$W_\Sigma$};
			\draw[->] (-2.15,2.35) node[left] {$\partial\Sigma$}--(-1.45,2.28);
			\draw[dashed,->] (-2.25,4.45)--(-2.48,5.05);
			\draw[dashed,->] (2.25,4.45)--(2.48,5.05);
			\coordinate (p) at (1.55,2.15);
			\begin{scope}[shift={(p)}]
				\draw[->,line width=0.8pt] (0,0)--(1.45,0) node[right] {$\mu_\Sigma$};
				\draw[->,line width=0.8pt] (0,0)--(0,-1.4) node[below] {$\nu_\Sigma$};
				\draw[->,line width=0.8pt] (0,0)--(1.17,-0.68) node[right] {$N_{S_\rho}$};
				\draw[->,line width=0.8pt] (0,0)--(-0.68,-1.17) node[below left] {$\mu_{S_\rho}$};
				\draw[->,dashed] (0,0)--(0.63,1.08) node[above right] {$\mu_{W_\Sigma}$};
				\draw[->] (0.5,0) arc[start angle=0,end angle=-120,radius=0.5];
				\node at (0.13,-0.55) {$\theta$};
				\fill (0,0) circle (1.2pt) node[above left] {$p$};
			\end{scope}
		\end{tikzpicture}  
		\caption{Orientation conventions on the rotational support, illustrated for a horizontal slice $\Sigma$ meeting the support at an obtuse contact angle. $\Sigma$ meets $S_\rho$ along $\partial\Sigma$. At $p\in\partial\Sigma$, $\mu_{W_\Sigma}=-\mu_{S_\rho}$, and the vectors satisfy \eqref{eq:normal-conormal-relations}. The dashed upper edges indicate that $S_\rho$ continues upward.}
		\label{fig:boundary-vectors}
	\end{figure}
	
	For $h\in(0,L)$, write
	\[
	D_h:=\{(y,h):|y|\leq\rho(h)\}.
	\]
	
	We first apply the height function when the wetted region $W_\Sigma$ contains $D_0$.
	
	\begin{proposition}
		\label{prop:bottom-disk-rigidity}
		Let $0<\theta<\pi$, and let $\Sigma\subset\overline{M_\rho}$ be a connected compact embedded minimal $\theta$-capillary hypersurface supported on $S_\rho$, with $D_0\subset W_\Sigma$. Assume \eqref{eq:slope-condition}. Then $\Sigma$ is a horizontal slice.
	\end{proposition}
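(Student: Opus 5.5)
The plan is to use the height function $z=\langle x,e_{n+1}\rangle$ restricted to $\Sigma$. Since $\Sigma$ is minimal, $\Delta_\Sigma z=0$. If $z$ is constant on $\Sigma$, then $\Sigma$ lies in a hyperplane $\{z=h\}$. Because $\Sigma$ is connected, compact and properly embedded with $\partial\Sigma\subset S_\rho$, it must then be the full slice $D_h$, and we are done. So I will assume $z$ is nonconstant and derive a contradiction with \eqref{eq:slope-condition}.

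Let $a=\min_\Sigma z$ and $b=\max_\Sigma z$. By the strong maximum principle both values are attained only on $\partial\Sigma$, at points $p_a$ and $p_b$, and $a<b$. Since $\partial\Sigma\subset S_\rho\setminus\partial D_0$, we have $0<a<b<L$. By the Hopf boundary lemma,
\[
\partial_{\mu_\Sigma}z(p_b)>0,\qquad \partial_{\mu_\Sigma}z(p_a)<0.
\]
At each of these extremal points, the curve $\partial\Sigma$ is tangent to the horizontal sphere $S_\rho\cap\{z=\text{const}\}$. Hence $\mu_{S_\rho}$ is $\pm$ the unit meridian tangent $\tau=(\rho'\omega,1)/\sqrt{1+\rho'^2}$.

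The key step is to determine this sign using $D_0\subset W_\Sigma$. Consider first the points of $S_\rho$ with $z>b$. They form a connected set that reaches the end of $S_\rho$ without meeting $\partial\Sigma$, so they lie outside $W_\Sigma$. Consequently, near $p_b$ the region $W_\Sigma$ lies below $\partial\Sigma$, and $\mu_{S_\rho}=-\tau$ there. Next consider the points of $\widehat S_\rho$ with $z<a$. Together with $D_0$ they form a connected set disjoint from $\partial\Sigma$ that contains $D_0\subset W_\Sigma$, so they lie inside $W_\Sigma$. Hence $\mu_{S_\rho}=-\tau$ at $p_a$ as well.

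Now I compute $\partial_{\mu_\Sigma}z$ from \eqref{eq:normal-conormal-relations}. We have $\langle N_{S_\rho},e_{n+1}\rangle=-\rho'/\sqrt{1+\rho'^2}$ and $\langle -\tau,e_{n+1}\rangle=-1/\sqrt{1+\rho'^2}$. Therefore, at both points,
\[
\partial_{\mu_\Sigma}z=\frac{-\sin\theta\,\rho'-\cos\theta}{\sqrt{1+\rho'^2}}.
\]
Since $\sin\theta>0$, the two Hopf inequalities become $\rho'(b)<-\cot\theta$ and $\rho'(a)>-\cot\theta$, with $0<a<b<L$. This directly contradicts \eqref{eq:slope-condition}. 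Therefore $z$ is constant and $\Sigma$ is a horizontal slice.

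The main obstacle is the topological sign determination for $\mu_{S_\rho}$ at the extremal points, in particular justifying that the region below the minimal height lies in $W_\Sigma$. This is exactly where the hypothesis $D_0\subset W_\Sigma$ is used, via the odd-crossing description of $W_\Sigma$ given above.
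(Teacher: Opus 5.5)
Your proposal is correct and follows essentially the same route as the paper's proof. Both arguments use the harmonicity of $z$ and Hopf's lemma at the boundary extrema. Both fix $\mu_{S_\rho}=-\nabla^{S_\rho}z/|\nabla^{S_\rho}z|$ at both extremal points, using $D_0\subset W_\Sigma$ for the minimum and compactness of $W_\Sigma$ for the maximum. The resulting inequality $\rho'(b)<-\cot\theta<\rho'(a)$ then contradicts \eqref{eq:slope-condition}.
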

	
	\begin{proof}
		Suppose that $z|_\Sigma$ is nonconstant. Since $\Delta_\Sigma z=0$, the maximum principle gives points $p_\pm\in\partial\Sigma$ such that
		\[
		h_-:=z(p_-)=\min_\Sigma z<h_+:=\max_\Sigma z=z(p_+).
		\]
		Since $p_\pm$ are extrema of $z|_{\partial\Sigma}$, every $X\in T_{p_\pm}\partial\Sigma$ satisfies $X(z)=0$. Thus $T_{p_\pm}\partial\Sigma$ is contained in $T_{p_\pm}(S_\rho\cap\{z=h_\pm\})$. Both spaces have dimension $n-1$, and hence
		\[
		T_{p_\pm}\partial\Sigma=T_{p_\pm}\bigl(S_\rho\cap\{z=h_\pm\}\bigr).
		\]
		Thus $\mu_{S_\rho}(p_\pm)$ is parallel to $\nabla^{S_\rho}z(p_\pm)$. We next determine its direction. Since $\partial W_\Sigma=\partial\Sigma$ lies between heights $h_-$ and $h_+$,
		\[
		D_0\cup\bigl(S_\rho\cap\{0<z<h_-\}\bigr)\subset W_\Sigma,\qquad
		S_\rho\cap\{z>h_+\}\subset S_\rho\setminus W_\Sigma.
		\]
		The first inclusion follows from $D_0\subset W_\Sigma$, and the second from the compactness of $W_\Sigma$. At $p_-$, the first inclusion places $W_\Sigma$ on the side of decreasing height. At $p_+$, the second inclusion places $S_\rho\setminus W_\Sigma$ on the side of increasing height. Hence $\mu_{S_\rho}=-\mu_{W_\Sigma}$ points toward decreasing height at both points. At $(\rho(z)\omega,z)\in S_\rho$, where $\omega\in\Sph^{n-1}$ is the angular variable,
		\[
		\nabla^{S_\rho}z=\frac{(\rho'\omega,1)}{1+\rho'^2},
		\qquad
		N_{S_\rho}=\frac{(\omega,-\rho')}{\sqrt{1+\rho'^2}}.
		\]
		Therefore, at $p_\pm$,
		\[
		\mu_{S_\rho}
		=-\frac{\nabla^{S_\rho}z}{|\nabla^{S_\rho}z|}
		=\frac{(-\rho'\omega,-1)}{\sqrt{1+\rho'^2}}.
		\]
		Hence \eqref{eq:normal-conormal-relations} gives
		\begin{align*}
			\partial_{\mu_\Sigma} z
			&=\langle\mu_\Sigma,e_{n+1}\rangle\\
			&=\sin\theta\,\langle N_{S_\rho},e_{n+1}\rangle+\cos\theta\,\langle\mu_{S_\rho},e_{n+1}\rangle\\
			&=\sin\theta\left(-\frac{\rho'}{\sqrt{1+\rho'^2}}\right)-\cos\theta\left(\frac{1}{\sqrt{1+\rho'^2}}\right)\\
			&=\frac{-\rho'\sin\theta-\cos\theta}{\sqrt{1+\rho'^2}}.
		\end{align*}
		The Hopf boundary lemma now yields
		\[
		\frac{-\rho'(h_+)\sin\theta-\cos\theta}{\sqrt{1+\rho'(h_+)^2}}>0,\qquad
		\frac{-\rho'(h_-)\sin\theta-\cos\theta}{\sqrt{1+\rho'(h_-)^2}}<0.
		\]
		This implies
		\[
		\rho'(h_+)<-\cot\theta<\rho'(h_-),\qquad h_-<h_+,
		\]
		contradicting \eqref{eq:slope-condition}. Thus $z|_\Sigma\equiv h$. Each component of $\partial\Sigma$ is open and closed in the horizontal sphere $S_\rho\cap\{z=h\}$, so $\partial\Sigma=\partial D_h$. Since $\Sigma\subset D_h$ is connected and embedded, $\Sigma=D_h$. The contact angle condition then gives $\rho'(h)=-\cot\theta$.
	\end{proof}
	
	We now prepare the proof of Theorem~\ref{thm:intro-disk-rigidity}(iii). For $0<\theta<\pi$, $\theta\neq\pi/2$, Proposition~\ref{prop:integral-identity} gives an integral identity for any possible component of $\Sigma$ representing the zero relative homology class. Proposition~\ref{prop:automatic-separation} combines this identity with \eqref{eq:two-point-barrier} to exclude that class.
	
	The first step is the integral identity for the case $W_\Sigma\subset S_\rho$.
	
	\begin{proposition} 
		\label{prop:integral-identity}
		Let $0<\theta<\pi$ with $\theta\neq\pi/2$, and let $\Sigma\subset\overline{M_\rho}$ be a connected compact embedded minimal $\theta$-capillary hypersurface supported on $S_\rho$. If $[\Sigma]=0$, then
		\[
		\int_{W_\Sigma}\Delta_{S_\rho}z\,dA
		+\tan\theta\int_{\partial W_\Sigma}
		\frac{\rho'}{\sqrt{1+\rho'^2}}\,ds=0.
		\]
	\end{proposition}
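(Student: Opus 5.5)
The plan is to integrate the harmonic height function over $\Sigma$ and the Laplacian of the height function over $W_\Sigma$. The two boundary integrals both live on $\partial\Sigma=\partial W_\Sigma$, and the conormal relation \eqref{eq:normal-conormal-relations} links them.

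\emph{First identity (on $\Sigma$).} Since $\Sigma$ is minimal, $\Delta_\Sigma z=0$. The divergence theorem on the compact hypersurface $\Sigma$ then gives
\[
0=\int_{\partial\Sigma}\partial_{\mu_\Sigma}z\,ds=\int_{\partial\Sigma}\langle\mu_\Sigma,e_{n+1}\rangle\,ds .
\]
Substitute $\mu_\Sigma=\sin\theta\,N_{S_\rho}+\cos\theta\,\mu_{S_\rho}$ from \eqref{eq:normal-conormal-relations} and use $\langle N_{S_\rho},e_{n+1}\rangle=-\rho'/\sqrt{1+\rho'^2}$. This yields
\[
0=-\sin\theta\int_{\partial\Sigma}\frac{\rho'}{\sqrt{1+\rho'^2}}\,ds+\cos\theta\int_{\partial\Sigma}\langle\mu_{S_\rho},e_{n+1}\rangle\,ds .
\]

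\emph{Second identity (on $W_\Sigma$).} Because $[\Sigma]=0$, we have $W_\Sigma\subset S_\rho$. Hence $W_\Sigma$ is a compact regular domain in the smooth hypersurface $S_\rho$, away from the corner $\partial D_0$, and the divergence theorem on $S_\rho$ applies to $\nabla^{S_\rho}z$. The vector $\mu_{W_\Sigma}$ is tangent to $S_\rho$, so $\langle\nabla^{S_\rho}z,\mu_{W_\Sigma}\rangle=\langle e_{n+1},\mu_{W_\Sigma}\rangle$. Using $\mu_{S_\rho}=-\mu_{W_\Sigma}$ we get
\[
\int_{W_\Sigma}\Delta_{S_\rho}z\,dA=\int_{\partial W_\Sigma}\langle e_{n+1},\mu_{W_\Sigma}\rangle\,ds=-\int_{\partial\Sigma}\langle\mu_{S_\rho},e_{n+1}\rangle\,ds .
\]

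\emph{Combining.} Insert the second identity into the first. Then divide by $-\cos\theta$, which is nonzero since $\theta\neq\pi/2$. Since $\partial W_\Sigma=\partial\Sigma$, this gives exactly
\[
\int_{W_\Sigma}\Delta_{S_\rho}z\,dA+\tan\theta\int_{\partial W_\Sigma}\frac{\rho'}{\sqrt{1+\rho'^2}}\,ds=0 .
\]

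The main point to check is the orientation bookkeeping. For $\theta\neq\pi/2$, the normal $\nu_\Sigma$ is fixed by \eqref{eq:angle}, and \eqref{eq:normal-conormal-relations} must hold with $\mu_{S_\rho}$ pointing into $W_\Sigma$ consistently on every component of $\partial\Sigma$. The paper asserts this for its conventions, and I would rely on it; it is the only place a sign could go wrong. The rest is routine: $W_\Sigma$ avoids the non-smooth edge $\partial D_0$ precisely because $W_\Sigma\subset S_\rho$, and $\rho\in C^2$ makes $\Delta_{S_\rho}z$ continuous.
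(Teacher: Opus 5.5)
Your proposal is correct and follows the paper's proof essentially line by line. You integrate $\Delta_\Sigma z=0$ to get $\int_{\partial\Sigma}\langle\mu_\Sigma,e_{n+1}\rangle\,ds=0$, expand $\mu_\Sigma$ via \eqref{eq:normal-conormal-relations}, turn the $\mu_{S_\rho}$-term into $-\int_{W_\Sigma}\Delta_{S_\rho}z\,dA$ by the divergence theorem on $W_\Sigma\subset S_\rho$, and divide by $-\cos\theta$; like you, the paper takes \eqref{eq:normal-conormal-relations} as its standing orientation convention rather than deriving it.
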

	
	\begin{proof}
		The condition $[\Sigma]=0$ gives $W_\Sigma\subset S_\rho$ and $\partial W_\Sigma=\partial\Sigma$. Since $\Delta_\Sigma z=0$, \eqref{eq:normal-conormal-relations} and
		\[
		\langle N_{S_\rho},e_{n+1}\rangle=-\frac{\rho'}{\sqrt{1+\rho'^2}},
		\qquad
		\langle\mu_{S_\rho},e_{n+1}\rangle
		=-\langle\mu_{W_\Sigma},\nabla^{S_\rho}z\rangle
		\]
		give
		\begin{align*}
			0
			&=\int_{\partial\Sigma}\langle\mu_\Sigma,e_{n+1}\rangle\,ds\\
			&=-\sin\theta\int_{\partial W_\Sigma}
			\frac{\rho'}{\sqrt{1+\rho'^2}}\,ds
			-\cos\theta\int_{\partial W_\Sigma}
			\langle\mu_{W_\Sigma},\nabla^{S_\rho}z\rangle\,ds\\
			&=-\sin\theta\int_{\partial W_\Sigma}
			\frac{\rho'}{\sqrt{1+\rho'^2}}\,ds
			-\cos\theta\int_{W_\Sigma}\Delta_{S_\rho}z\,dA\\
			&=-\cos\theta\left(
			\int_{W_\Sigma}\Delta_{S_\rho}z\,dA
			+\tan\theta\int_{\partial W_\Sigma}
			\frac{\rho'}{\sqrt{1+\rho'^2}}\,ds\right).
		\end{align*}
		Since $\cos\theta\neq0$, the desired identity follows.
	\end{proof}
	
	For the comparison argument, set
	$$J=\frac{\rho^{n-1}}{\sqrt{1+\rho'^2}},$$ 
	so $A_\theta=J(1+\tan\theta\,\rho')$ and $B_\theta=J(1-\tan\theta\,\rho')$.
	
	The strict profile inequality then excludes $W_\Sigma\subset S_\rho$.
	
	\begin{proposition}
		\label{prop:automatic-separation}
		Let $0<\theta<\pi$ with $\theta\neq\pi/2$, and let $\Sigma\subset\overline{M_\rho}$ be a connected compact embedded minimal $\theta$-capillary hypersurface supported on $S_\rho$. If \eqref{eq:two-point-barrier} holds, then $[\Sigma]=[D_0]$.
	\end{proposition}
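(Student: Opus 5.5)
We argue by contradiction: assume $[\Sigma]=0$, so $W_\Sigma\subset S_\rho$, and then play Proposition~\ref{prop:integral-identity} against \eqref{eq:two-point-barrier}. First, $z|_\Sigma$ cannot be constant. Otherwise $\partial\Sigma$ would be a union of components of a single horizontal sphere $S_\rho\cap\{z=h\}$. A single sphere bounds no compact region inside $S_\rho$ (its bounded side in $\widehat S_\rho$ contains $D_0$), so $[\Sigma]=0$ would be impossible. So let $h_-<h_+$ be the minimum and maximum of $z$ on $\Sigma$, attained at $p_\pm\in\partial\Sigma$.

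\textbf{Step 1: localize the boundary heights.} Repeat the Hopf-lemma argument of Proposition~\ref{prop:bottom-disk-rigidity}, now with $W_\Sigma\subset S_\rho$.
\begin{itemize}
\item At $p_+$, compactness of $W_\Sigma$ still puts $S_\rho\cap\{z>h_+\}$ outside $W_\Sigma$, so $\mu_{S_\rho}$ points downward. As before this gives $\rho'(h_+)<-\cot\theta$, hence $h_+\le \ell_\theta$.
\item At $p_-$, the region $S_\rho\cap\{z<h_-\}$ together with $D_0$ lies outside $W_\Sigma$, so $\mu_{S_\rho}$ points upward. This yields an analogous strict inequality involving $\rho'(h_-)$ and $+\cot\theta$.
\end{itemize}
Together these place every boundary height in $[h_-,h_+]\subset(0,\ell_\theta]$. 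With some extra care at the endpoint, using strictness and continuity of $\rho'$, the heights should lie in the range where \eqref{eq:two-point-barrier} applies.

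\textbf{Step 2: rewrite the identity with a height-dependent vector field.} In coordinates $(z,\omega)$ on $S_\rho$ we have $dA=\rho^{n-1}\sqrt{1+\rho'^2}\,dz\,d\omega$ and $|\nabla^{S_\rho}z|=(1+\rho'^2)^{-1/2}$. Hence
\[
\Delta_{S_\rho}z=\frac{J'(z)}{\rho^{n-1}\sqrt{1+\rho'^2}}.
\]
For any function $g(z)$, the field
\[
V=g(z)\,\frac{\nabla^{S_\rho}z}{|\nabla^{S_\rho}z|}
\qquad\text{satisfies}\qquad
\operatorname{div}V=\frac{(\rho^{n-1}g)'}{\rho^{n-1}\sqrt{1+\rho'^2}}.
\]
Choose $g=(J-c)/\rho^{n-1}$ for a constant $c$ to be fixed later; then $\operatorname{div}V=\Delta_{S_\rho}z$. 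Put $t:=\langle\mu_{W_\Sigma},\nabla^{S_\rho}z/|\nabla^{S_\rho}z|\rangle\in[-1,1]$. By the divergence theorem, Proposition~\ref{prop:integral-identity} becomes
\[
\int_{\partial W_\Sigma}\rho^{-(n-1)}\Bigl[(J-c)\,t+\tan\theta\,J\rho'\Bigr]\,ds=0 .
\]
The bracket is affine in $t$. At $t=1$ it equals $A_\theta(z)-c$, and at $t=-1$ it equals $-(B_\theta(z)-c)$.

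\textbf{Step 3: choose $c$ and derive the contradiction.} We want one $c$ with $A_\theta(z)<c<B_\theta(z)$ for all boundary heights $z$. Then the bracket is strictly negative for every $t\in[-1,1]$, and the integral cannot vanish.
\begin{itemize}
\item Taking $c$ strictly between $\max_{[h_-,h_+]}A_\theta$ and $\min_{[h_-,h_+]}B_\theta$ requires $A_\theta(b)<B_\theta(a)$ for all $a,b\in[h_-,h_+]$.
\item For $a<b$ this is exactly \eqref{eq:two-point-barrier}.
\item For $b=a$ it follows by letting $a\uparrow b$ and using continuity, but this gives only a non-strict inequality.
\item For $b<a$ it needs a separate argument.
\end{itemize}
This is the main obstacle. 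For the case $b<a$, I would first try to show that $A_\theta$ is essentially nonincreasing and $B_\theta$ nondecreasing on $(0,\ell_\theta)$, by letting $a\uparrow b$ or $b\downarrow a$ in \eqref{eq:two-point-barrier}. Then $A_\theta(b)\le A_\theta(\cdot)$ and $B_\theta(\cdot)\le B_\theta(a)$ can be chained through intermediate heights.

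If no uniform $c$ is available, I would instead take a height-dependent choice, say $c(z)$ built from $A_\theta$ on one level set and $B_\theta$ on another. I would then use the coarea formula on level sets to compare the contributions from $\{t>0\}$ and $\{t<0\}$ on each horizontal slice. Along generic level sets $\{z=s\}\cap W_\Sigma$, the sign of $t$ tells whether $W_\Sigma$ lies above or below. This should pair a point at a lower height $a$ where $t<0$ with a point at a higher height $b$ where $t>0$, and \eqref{eq:two-point-barrier} with strict inequality then forces the total integral to have a strict sign.
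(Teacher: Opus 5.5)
\textbf{Steps 0--2 are sound. The gap is in Step 3.} In Step 1 you get $h_+<\ell_\theta$ directly by continuity, and the inequality at $p_-$ is not needed. Step 2 is the paper's framework: your $V$ with $g=(J-c)/\rho^{n-1}$ equals $-\tan\theta\,X_-$, with your constant $c$ in place of the paper's $K_-$.

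A constant $c$ with $A_\theta<c<B_\theta$ at all boundary heights requires $A_\theta(b)<B_\theta(a)$ also when $b<a$. Condition \eqref{eq:two-point-barrier} gives nothing of the sort. Letting $a\uparrow b$ or $b\downarrow a$ only yields the pointwise inequality $A_\theta\le B_\theta$, i.e.\ $\tan\theta\,\rho'\le0$ on $(0,\ell_\theta)$. It does not give monotonicity of $A_\theta$ or $B_\theta$, and both hopes fail in general.

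For example, take $n=2$, $\tan\theta=-1/10$, and $\rho$ solving $\rho\rho''=2(1+\rho'^2)$ with $\rho(0)=1$, $\rho'(0)=0$. Then $1+\rho'^2=\rho^4$, $J=1/\rho$, and $\rho'>0$ on $(0,L)$. For $0<a<b<\ell_\theta$ one has $A_\theta(b)<1/\rho(b)<1/\rho(a)<B_\theta(a)$, so \eqref{eq:two-point-barrier} holds. However, at the height $z_2<\ell_\theta$ where $\rho'(z_2)=5$, one finds $B_\theta(z_2)=1.5\cdot 26^{-1/4}\approx0.66$, whereas $A_\theta(z_1)\to1$ as $z_1\downarrow0$. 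So $B_\theta$ is not nondecreasing. No constant $c$ can serve for a wetted region whose boundary heights run from near $0$ up to $z_2$. Your coarea ``pairing'' fallback is too vague to close this.

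\textbf{The missing idea is to make $c$ depend on the height and be nonincreasing.} For Lipschitz $c=c(z)$, your computation gives $\operatorname{div}V=\Delta_{S_\rho}z-c'(z)\rho^{1-n}(1+\rho'^2)^{-1/2}$. The identity then becomes
\[
\int_{\partial W_\Sigma}\rho^{1-n}\bigl[(J-c)\,t+\tan\theta\,J\rho'\bigr]\,ds+\int_{W_\Sigma}\frac{c'(z)}{\rho^{n-1}\sqrt{1+\rho'^2}}\,dA=0 .
\]
If $c$ is nonincreasing, the interior term is $\le0$. The bracket is $\le0$ as soon as $A_\theta(z)\le c(z)\le B_\theta(z)$ at the same height, so no comparison across different heights is ever needed.

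A nonincreasing function squeezed between $A_\theta$ and $B_\theta$ is exactly what \eqref{eq:two-point-barrier} provides. On each component $E$ of $W_\Sigma$, with $\alpha=\min_Ez$ and $\beta=\max_Ez$, the paper takes $c=K_-(z)=\max_{z\le s\le\beta}A_\theta(s)$.

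Only the non-strict $A_\theta\le K_-\le B_\theta$ holds, so strictness must be recovered separately:
\begin{itemize}
\item If $\rho'(\alpha)\neq0$, then $K_-(\alpha)<B_\theta(\alpha)$. At a lowest point of $\partial E$, where $t=-1$, the bracket equals $K_-(\alpha)-B_\theta(\alpha)<0$.
\item If $\rho'(\alpha)=0$, then $K_-(\alpha)=A_\theta(\alpha)>A_\theta(\beta)=K_-(\beta)$. Hence $K_-'<0$ on a set of positive measure, and the coarea formula makes the interior term strictly negative.
\end{itemize}
Summing over all components $E$ then contradicts Proposition~\ref{prop:integral-identity}.
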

	
	\begin{proof} 
		For $0<a<\ell_\theta$, letting $b\downarrow a$ in \eqref{eq:two-point-barrier} gives
		\[
		0\leq B_\theta(a)-A_\theta(a)
		=-2J(a)\tan\theta\,\rho'(a).
		\]
		Thus
		\[
		\tan\theta\,\rho'\leq0
		\qquad\text{on }(0,\ell_\theta).
		\]
		Suppose $[\Sigma]=0$. The height function $z$ on $\Sigma$ is nonconstant, and the Hopf lemma at a highest boundary point gives $\rho'(h_+)<-\cot\theta$. Continuity gives $h_+<\ell_\theta$. Moreover, $|\nabla^{S_\rho}z|=(1+\rho'^2)^{-1/2}>0$, so the extrema of $z$ on each component $E$ of $W_\Sigma$ occur on $\partial E\subset\partial\Sigma$. Hence
		\[
		0<\min_Ez\leq\max_Ez\leq h_+<\ell_\theta.
		\]
		Set
		\[
		\alpha=\min_E z,\qquad \beta=\max_E z,\qquad
		K_-(z)=\max_{z\leq s\leq\beta}A_\theta(s).
		\]
		Then $K_-$ is Lipschitz and nonincreasing, and $K_-(z)\geq A_\theta(z)$. 
		Since $J>0$ and $\tan\theta\,\rho'\leq0$ on $(0,\ell_\theta)$,
		\[
		B_\theta(z)-A_\theta(z)=-2J(z)\tan\theta\,\rho'(z)\geq0.
		\]
		For $s>z$, \eqref{eq:two-point-barrier} gives $A_\theta(s)<B_\theta(z)$, while $A_\theta(z)\leq B_\theta(z)$. Hence
		\[
		A_\theta(z)\leq K_-(z)\leq B_\theta(z)
		\qquad(\alpha\leq z\leq\beta).
		\]
		Since $J>0$,
		\[
		\tan\theta\,\rho'
		\leq\frac{K_-}{J}-1
		\leq-\tan\theta\,\rho',
		\qquad
		\left|\frac{1-K_-/J}{-\tan\theta}\right|\leq|\rho'|.
		\]
		
		Define
		\[
		X_-=\frac{1-K_-/J}{-\tan\theta}\nabla^{S_\rho}z.
		\]
		Since $|\nabla^{S_\rho}z|=(1+\rho'^2)^{-1/2}$,  
		\[
		|X_-|\leq\frac{|\rho'|}{\sqrt{1+\rho'^2}}.
		\]
		Let $\mu_E$ be the outward unit conormal of $\partial E$ in $E$. The Cauchy--Schwarz inequality and $\tan\theta\,\rho'\leq0$ give
		\begin{equation}
			\begin{aligned}
				\tan\theta\,\langle X_-,\mu_E\rangle
				&\geq-|\tan\theta|\,|X_-|  \geq-|\tan\theta|\frac{|\rho'|}{\sqrt{1+\rho'^2}}
				=\tan\theta\frac{\rho'}{\sqrt{1+\rho'^2}}
				\qquad\text{on }\partial E.
			\end{aligned}
			\label{eq:component-boundary-control}
		\end{equation}
		
		We next compute $\operatorname{div}_{S_\rho}X_-$. The induced metric and the gradient of the height function on $S_\rho$ are
		\[
		g_{S_\rho}=(1+\rho'^2)\,dz^2+\rho^2g_{\Sph^{n-1}},
		\qquad
		\nabla^{S_\rho}z=\frac{1}{1+\rho'^2}\,\partial_z.
		\]
		Therefore, for every function $\phi=\phi(z)$, the coordinate formula for divergence gives
		\begin{align*}
			\operatorname{div}_{S_\rho}(\phi\nabla^{S_\rho}z)
			&=\frac1{\rho^{n-1}\sqrt{1+\rho'^2}}
			\frac{d}{dz}\left(
			\rho^{n-1}\sqrt{1+\rho'^2}\,\frac{\phi}{1+\rho'^2}
			\right)\\
			&=\frac1{\rho^{n-1}\sqrt{1+\rho'^2}}
			\frac{d}{dz}\left(\frac{\rho^{n-1}\phi}{\sqrt{1+\rho'^2}}\right).
		\end{align*}
		Taking $\phi=1$ and then $\phi=(1-K_-/J)/(-\tan\theta)$ gives
		\begin{align*}
			\Delta_{S_\rho}z
			&=\frac1{\rho^{n-1}\sqrt{1+\rho'^2}}\frac{dJ}{dz}
			=\frac{J'}{\rho^{n-1}\sqrt{1+\rho'^2}},\\
			\operatorname{div}_{S_\rho}X_-
			&=\frac1{\rho^{n-1}\sqrt{1+\rho'^2}}
			\frac{d}{dz}\left(\frac{K_--J}{\tan\theta}\right)\\
			&=\frac{(K_--J)'}{\tan\theta\,\rho^{n-1}\sqrt{1+\rho'^2}}.
		\end{align*}
		Since $K_-'\leq0$ almost everywhere,
		\begin{equation}
			\Delta_{S_\rho}z+\tan\theta\,\operatorname{div}_{S_\rho}X_-
			=\frac{K_-'}{\rho^{n-1}\sqrt{1+\rho'^2}}\leq0.
			\label{eq:component-interior-control}
		\end{equation}
		Integrating \eqref{eq:component-interior-control} over $E$ and applying the divergence theorem together with \eqref{eq:component-boundary-control}, we obtain
		\begin{equation}
			\begin{aligned}
				&\int_E\Delta_{S_\rho}z\,dA
				+\tan\theta\int_{\partial E}\frac{\rho'}{\sqrt{1+\rho'^2}}\,ds\\
				&=\int_E\left(\Delta_{S_\rho}z
				+\tan\theta\,\operatorname{div}_{S_\rho}X_-\right)dA\\
				&\quad+\tan\theta\int_{\partial E}
				\left(\frac{\rho'}{\sqrt{1+\rho'^2}}-\langle X_-,\mu_E\rangle\right)ds
				\leq0.
			\end{aligned}
			\label{eq:component-integral-comparison}
		\end{equation}
		We show that the last inequality is strict. If $\rho'(\alpha)\neq0$, choose $p\in\partial E$ with $z(p)=\alpha$. Then
		\[
		\mu_E(p)=-\frac{\nabla^{S_\rho}z}{|\nabla^{S_\rho}z|}(p).
		\]
		Since $\tan\theta\,\rho'(\alpha)<0$,
		\[
		A_\theta(\alpha)<B_\theta(\alpha).
		\]
		For $\alpha<s\leq\beta$, \eqref{eq:two-point-barrier} gives
		\[
		A_\theta(s)<B_\theta(\alpha).
		\]
		Therefore 
		\[
		K_-(\alpha)=\max_{\alpha\leq s\leq\beta}A_\theta(s)
		<B_\theta(\alpha).
		\]
		Using the definition of $X_-$ and
		$|\nabla^{S_\rho}z|=(1+\rho'^2)^{-1/2}$, we obtain
		\begin{align*}
			&\tan\theta\left(
			\frac{\rho'(\alpha)}{\sqrt{1+\rho'(\alpha)^2}}
			-\langle X_-,\mu_E\rangle(p)\right)
			\\=&\tan\theta\left(
			\frac{\rho'(\alpha)}{\sqrt{1+\rho'(\alpha)^2}}
			+\frac{1-K_-(\alpha)/J(\alpha)}{-\tan\theta}
			\frac{1}{\sqrt{1+\rho'(\alpha)^2}}\right)\\
			=&\frac{K_-(\alpha)/J(\alpha)
				-\bigl(1-\tan\theta\,\rho'(\alpha)\bigr)}
			{\sqrt{1+\rho'(\alpha)^2}} 
			= \frac{K_-(\alpha)-B_\theta(\alpha)}
			{J(\alpha)\sqrt{1+\rho'(\alpha)^2}}<0.
		\end{align*}
		Thus the boundary integrand in \eqref{eq:component-integral-comparison} is negative at $p$. It is nonpositive everywhere by \eqref{eq:component-boundary-control}, so its integral is strictly negative. Together with \eqref{eq:component-interior-control}, this makes the right-hand side of \eqref{eq:component-integral-comparison} strictly negative.
		
		It remains to consider $\rho'(\alpha)=0$. For $\alpha<s\leq\beta$, \eqref{eq:two-point-barrier} gives
		\[
		A_\theta(s)<B_\theta(\alpha)=A_\theta(\alpha)
		\]
		and hence
		\[
		K_-(\alpha)=A_\theta(\alpha)>A_\theta(\beta)
		=K_-(\beta).
		\]
		
		Since $K_-$ is Lipschitz and nonincreasing, $K_-'<0$ on a set of positive measure. The interior of $E$ is connected and $|\nabla^{S_\rho}z|>0$, so for every $t\in(\alpha,\beta)$, the slice $E\cap\{z=t\}$ contains a relatively open subset of the horizontal sphere and hence has positive $(n-1)$-dimensional measure. By the coarea formula and $|\nabla^{S_\rho}z|=(1+\rho'^2)^{-1/2}$,
		\[
		\int_E\frac{K_-'(z)}{\rho^{n-1}\sqrt{1+\rho'^2}}\,dA
		=\int_\alpha^\beta\frac{K_-'(t)}{\rho(t)^{n-1}}
		\mathcal H^{n-1}\bigl(E\cap\{z=t\}\bigr)\,dt<0.
		\]
		Hence the integral of the right-hand side of \eqref{eq:component-interior-control} over $E$ is strictly negative, while the boundary term in \eqref{eq:component-integral-comparison} is nonpositive.
		
		Therefore, in either case, the left-hand side of \eqref{eq:component-integral-comparison} is strictly negative for every component $E$ of $W_\Sigma$. Summing over $E$ contradicts Proposition~\ref{prop:integral-identity}. Hence $[\Sigma]=[D_0]$.
	\end{proof}
	
	We now prove Theorem~\ref{thm:intro-disk-rigidity} and Corollary~\ref{cor:half-catenoid}.
	
	\begin{proof}[Proof of Theorem~\ref{thm:intro-disk-rigidity}]
		For part~(i), let $\Sigma_0$ be a component of $\Sigma$ with $[\Sigma_0]=[D_0]$. Then $D_0\subset W_{\Sigma_0}$, and Proposition~\ref{prop:bottom-disk-rigidity} implies that $\Sigma_0$ is a horizontal slice.
		
		For part~(ii), suppose that $\theta=\pi/2$, fix a component $\Sigma_0$ of $\Sigma$, and suppose that $z|_{\Sigma_0}$ is nonconstant. Since $z|_{\Sigma_0}$ is harmonic, its extrema are attained at points $p_\pm\in\partial\Sigma_0$, with
		\[
		z(p_-)=\min_{\Sigma_0}z=:h_-<h_+:=\max_{\Sigma_0}z=z(p_+).
		\]
		The Hopf boundary lemma reduces
		\[
		\partial_{\mu_{\Sigma_0}}z(p_+)>0,
		\qquad
		\partial_{\mu_{\Sigma_0}}z(p_-)<0.
		\]
		Since $\theta=\pi/2$, \eqref{eq:normal-conormal-relations} gives
		\[
		\partial_{\mu_{\Sigma_0}}z=-\frac{\rho'}{\sqrt{1+\rho'^2}}
		\qquad\text{on }\partial\Sigma_0.
		\]
		Consequently,
		\[
		\rho'(h_+)<0<\rho'(h_-),
		\qquad h_-<h_+,
		\]
		contradicting \eqref{eq:slope-condition}. Hence $z|_{\Sigma_0}\equiv h$. As in Proposition~\ref{prop:bottom-disk-rigidity}, connectedness gives $\Sigma_0=D_h$. Since $\Sigma_0$ is arbitrary, every component of $\Sigma$ is a horizontal slice.
		
		For part~(iii), fix a component $\Sigma_0$ of $\Sigma$. Proposition~\ref{prop:automatic-separation} gives $[\Sigma_0]=[D_0]$. Part~(i) then gives that $\Sigma_0$ is a horizontal slice.
	\end{proof}

	\begin{proof}[Proof of Corollary~\ref{cor:half-catenoid}]
		For the half-catenoid,
		\[
		\rho\rho''=(n-1)(1+\rho'^2),
		\]
		hence $\rho''>0$, $\rho'>0$ on $(0,L)$, and \eqref{eq:slope-condition} holds for every $0<\theta<\pi$.
		
		We first prove that every component of $\Sigma$ is a horizontal slice. For $\theta=\pi/2$, this follows from Theorem~\ref{thm:intro-disk-rigidity}(ii). Suppose $\theta\neq\pi/2$, and fix a component $\Sigma_0$ of $\Sigma$. Since the half-catenoid is minimal,
		\[
		\Delta_{S_\rho}z=0.
		\]
		If $[\Sigma_0]=0$, Proposition~\ref{prop:integral-identity} gives
		\[
		\tan\theta\int_{\partial W_{\Sigma_0}}
		\frac{\rho'}{\sqrt{1+\rho'^2}}\,ds=0.
		\]
		This is impossible because $\tan\theta\neq0$, $\rho'>0$ on $(0,L)$, and $\partial W_{\Sigma_0}=\partial\Sigma_0$ is nonempty. Hence $[\Sigma_0]=[D_0]$, and Proposition~\ref{prop:bottom-disk-rigidity} gives that $\Sigma_0$ is a horizontal slice. Since $\Sigma_0$ is arbitrary, every component of $\Sigma$ is a horizontal slice.
		
		It remains to prove the nonexistence statement. The condition $\Sigma\setminus\partial\Sigma\subset\operatorname{int}M_\rho$ excludes the horizon $D_0$, so every horizontal slice under consideration is $D_h$ for some $h>0$. If $0<\theta\leq\pi/2$, such a slice would satisfy
		\[
		\rho'(h)=-\cot\theta\leq0,
		\]
		contrary to $\rho'(h)>0$. Thus no such hypersurface exists for $0<\theta\leq\pi/2$.
	\end{proof} 
	
	Having proved Theorem~\ref{thm:intro-disk-rigidity} and Corollary~\ref{cor:half-catenoid}, we now show that the slope condition \eqref{eq:slope-condition} cannot be omitted from Theorem~\ref{thm:intro-disk-rigidity}. For every $0<\theta<\pi$, the conclusion may fail for a component of $\Sigma$ homologous to $D_0$. When $\theta\neq\pi/2$, the strict profile inequality \eqref{eq:two-point-barrier} may still hold.

	\begin{theorem}
		\label{thm:slope-condition-sharpness}
		Fix $n\geq2$ and $0<\theta<\pi$. There exist a smooth one-ended rotational support $S_\rho$ without cylindrical intervals and a compact embedded minimal $\theta$-capillary hypersurface $\Sigma$ supported on $S_\rho$ such that $D_0\subset W_\Sigma$ and \eqref{eq:slope-condition} fails, while $\Sigma$ is a nonhorizontal planar $n$-ball. If $\theta\neq\pi/2$, the support can also be chosen so that \eqref{eq:two-point-barrier} holds. Thus the conclusion of Theorem~\ref{thm:intro-disk-rigidity} fails if \eqref{eq:slope-condition} is omitted.
	\end{theorem}

	\begin{proof}
		The construction is obtained by cutting a sphere centered on the $z$-axis with a slightly tilted hyperplane. The sphere is rotationally symmetric, whereas the resulting planar $n$-ball is not horizontal. We then extend the spherical profile above the $n$-ball without changing the $n$-ball.
		
		\smallskip
		\textit{Step 1.} We construct a spherical profile that violates \eqref{eq:slope-condition}.
		Choose $\delta>0$ such that $0<\theta-\delta<\theta+\delta<\pi$, and set
		\[
		H_0=\cos(\theta-\delta)-\cos(\theta+\delta),
		\qquad
		\rho_0(z)=\sqrt{1-\bigl(z+\cos(\theta+\delta)\bigr)^2}
		\quad(0\leq z\leq H_0).
		\]
		Then
		\begin{align*}
			\rho_0'(z)
			&=-\frac{z+\cos(\theta+\delta)}{\rho_0(z)},\\
			\rho_0''(z)
			&=-\frac{\rho_0(z)^2+\bigl(z+\cos(\theta+\delta)\bigr)^2}{\rho_0(z)^3}
			=-\frac1{\rho_0(z)^3}<0.
		\end{align*}
		Moreover,
		\[
		\rho_0'(0)=-\cot(\theta+\delta)>-\cot\theta>-\cot(\theta-\delta)=\rho_0'(H_0).
		\]
		Since $\rho_0''<0$, the slope $\rho_0'$ is strictly decreasing. Hence, for every sufficiently small $a>0$,
		\[
		\rho_0'(a)>-\cot\theta>\rho_0'(H_0).
		\]
		The pair $a<H_0$ therefore violates \eqref{eq:slope-condition}.
		
		\smallskip
		\textit{Step 2.} We construct a nonhorizontal planar $\theta$-capillary $n$-ball on the spherical part of the support.
		The rotation of the graph of $\rho_0$ is the part $0\leq z\leq H_0$ of the unit sphere centered at $-\cos(\theta+\delta)e_{n+1}$. It is therefore rotationally symmetric about the $z$-axis. Choose a unit vector $\eta$ making angle $\delta/2$ with $e_{n+1}$, and define
		\[
		\Sigma=\left\{x\in\overline{B_1(-\cos(\theta+\delta)e_{n+1})}:
		\left\langle x+\cos(\theta+\delta)e_{n+1},\eta\right\rangle=\cos\theta\right\},
		\qquad \nu_\Sigma=-\eta.
		\]
		Since $\eta\neq e_{n+1}$, the hyperplane containing $\Sigma$ is not horizontal. Thus $\Sigma$ is an embedded nonhorizontal minimal $n$-ball, and $\partial\Sigma$ is the intersection of this hyperplane with the unit sphere.
		
		We next verify that $\partial\Sigma$ lies in the part $0<z<H_0$ of the sphere. For $x\in\partial\Sigma$, write
		\[
		x+\cos(\theta+\delta)e_{n+1}
		=\cos\theta\,\eta+\sin\theta\,\xi,
		\qquad |\xi|=1,\qquad \langle\xi,\eta\rangle=0.
		\]
		Since $|\langle\xi,e_{n+1}\rangle|\leq\sin(\delta/2)$,
		\[
		0<\cos\left(\theta+\frac\delta2\right)-\cos(\theta+\delta)
		\leq z(x)
		\leq\cos\left(\theta-\frac\delta2\right)-\cos(\theta+\delta)
		<H_0.
		\]
		Because the height function is affine on $\Sigma$, its extrema occur on $\partial\Sigma$; hence the same bounds hold throughout $\Sigma$.
		
		The outward unit normal of the unit sphere is
		\[
		N_{S_{\rho_0}}=x+\cos(\theta+\delta)e_{n+1}.
		\]
		On $\partial\Sigma$, the defining equation of the hyperplane gives
		\[
		\langle\nu_\Sigma,N_{S_{\rho_0}}\rangle=-\cos\theta.
		\]
		Thus $\Sigma$ meets the rotational support at the constant angle $\theta$. Therefore $\Sigma$ is a nonhorizontal planar minimal $\theta$-capillary $n$-ball supported on the spherical part of $S_{\rho_0}$.

		\smallskip
		\textit{Step 3.} We verify that $D_0\subset W_\Sigma$.
		The hyperplane containing $\Sigma$ is the level set
		\[
		\left\langle x+\cos(\theta+\delta)e_{n+1},\eta\right\rangle=\cos\theta.
		\]
		Since $\rho_0(0)=\sin(\theta+\delta)$,
		\begin{align*}
			\max_{D_0}\left\langle x+\cos(\theta+\delta)e_{n+1},\eta\right\rangle
			&=\rho_0(0)\sin\frac\delta2
			+\cos(\theta+\delta)\cos\frac\delta2\\
			&=\sin(\theta+\delta)\sin\frac\delta2
			+\cos(\theta+\delta)\cos\frac\delta2\\
			&=\cos\left(\theta+\frac\delta2\right)<\cos\theta.
		\end{align*}
		Thus $D_0$ lies on the side of the hyperplane whose boundary inside the unit ball consists of $\Sigma$ and the spherical support. Hence $D_0\subset W_\Sigma$.

		\textit{Step 4.} We extend the spherical profile to a smooth one-ended support without changing $\Sigma$. Since $\Sigma\subset\{z<H_0\}$, it suffices to keep $\rho=\rho_0$ on $[0,H_0]$. If $\theta=\pi/2$, fix $C>|\rho_0'(H_0)|+1$ and choose
		\[
		0<\varepsilon<\frac{\rho_0(H_0)}{C}.
		\]
		Choose a constant $c\in(0,C)$ and a smooth function $p$ such that $p=\rho_0'$ near $H_0$,
		\[
		|p|\leq C\quad\text{on }[H_0,H_0+\varepsilon],
		\qquad p\equiv c\quad\text{on }[H_0+\varepsilon,\infty),
		\]
		and $p$ has a single zero in $(H_0,H_0+\varepsilon)$. Set
		\[
		\rho(z)=\rho_0(H_0)+\int_{H_0}^z p(t)\,dt
		\qquad(z\geq H_0).
		\]
		Then
		\[
		\rho(z)\geq\rho_0(H_0)-C\varepsilon>0
		\qquad(H_0\leq z\leq H_0+\varepsilon).
		\]
		For $z\geq H_0+\varepsilon$, one has $\rho'=c>0$, so $\rho$ remains positive. Hence $S_\rho$ is complete, proper, one-ended, and has no upper cap or cylindrical interval. This proves the case $\theta=\pi/2$.
		
		Assume now that $\theta\neq\pi/2$. At $(\rho,\rho')=(\sin\theta,-\cot\theta)$, \eqref{eq:profile-barriers} gives
		\[
		A_\theta=0<2\sin^n\theta=B_\theta.
		\]
		By continuity, there is a neighborhood $U$ of $(\sin\theta,-\cot\theta)$ such that
		\[
		A_\theta(v)<B_\theta(u)
		\]
		whenever the two pairs $(\rho(u),\rho'(u))$ and $(\rho(v),\rho'(v))$ belong to $U$. Since Steps~1--3 hold for every sufficiently small $\delta$, choose $\delta$ so that
		\[
		[\theta-\delta,\theta+\delta]\cap\{\pi/2\}=\varnothing,\qquad
		\{(\sin t,-\cot t):\theta-\delta\leq t\leq\theta+\delta\}\subset U.
		\]
		Then
		\[
		\{(\rho_0(z),\rho_0'(z)):0\leq z\leq H_0\}\subset U.
		\]
		
		Choose $H>H_0$ and extend $\rho_0$ smoothly to a positive $\rho$ on $[0,\infty)$ such that $\rho=\rho_0$ on $[0,H_0]$,
		\[
		\rho'<-\cot\theta\quad\text{on }[H_0,H),\qquad
		\rho'(H)=-\cot\theta,\qquad
		\rho'>-\cot\theta\quad\text{on }(H,\infty),
		\]
		\[
		(\rho(z),\rho'(z))\in U\qquad(H_0\leq z\leq H),
		\]
		and $\rho'$ is a positive constant for all sufficiently large $z$. To construct the extension, choose a smooth function $p$ agreeing with $\rho_0'$ near $H_0$ such that
		\[
		p<-\cot\theta\quad\text{on }[H_0,H),\qquad p(H)=-\cot\theta,\qquad p>-\cot\theta\quad\text{on }(H,\infty),
		\]
		with $p\not\equiv0$ on any interval and $p$ equal to a positive constant outside a compact set. Take $H-H_0$ small, keep $p=\rho_0'$ near $H_0$, and arrange
		\[
		p(z)=-\cot\theta+\lambda(z-H)
		\]
		near $H$ for some $\lambda>0$. Choose $p<-\cot\theta$ on $[H_0,H)$ and $p>-\cot\theta$ on $(H,\infty)$, with $p$ equal to a positive constant for all sufficiently large $z$. Set
		\[
		\rho(z)=\rho_0(H_0)+\int_{H_0}^z p(t)\,dt.
		\]
		Taking $H-H_0$ sufficiently small and $p$ sufficiently close to $\rho_0'$ on $[H_0,H]$ ensures $\rho>0$ and $(\rho,\rho')\in U$ there. The resulting $S_\rho$ is complete, proper, one-ended, and has no upper cap or cylindrical interval. Moreover,
		\[
		\ell_\theta=H,\qquad (\rho(z),\rho'(z))\in U\quad(0\leq z\leq H).
		\]
		Hence, for every $0<u<v<\ell_\theta$,
		\[
		A_\theta(v)<B_\theta(u),
		\]
		so \eqref{eq:two-point-barrier} holds. Since $\rho=\rho_0$ on $[0,H_0]$, the pair $a<H_0$ from Step~1 still violates \eqref{eq:slope-condition}, and the $n$-ball constructed in Step~2 is unchanged.
	\end{proof}
	
	\section{Catenoid-band rigidity}
	\label{sec:obtuse-classification}
	
	This section describes the catenoid-band alternative ruled out by \eqref{eq:two-point-barrier}. We prove Theorem~\ref{thm:intro-band-rigidity}: under the reverse profile inequality \eqref{eq:main-nonnegative-profile} and condition \eqref{eq:no-cylinder-below-ell}, every component of $\Sigma$ is an admissible catenoid band.
	
	An \textit{admissible $\theta$-capillary catenoid band} is a connected rotational minimal hypersurface
	\[
	\Sigma_f=\{(f(z)\omega,z):a\leq z\leq b,\ \omega\in\Sph^{n-1}\},\qquad 0<a<b<\ell_\theta,
	\]
	whose boundary consists of the two horizontal spheres in $S_\rho$ at heights $a$ and $b$, whose interior lies in $\operatorname{int}M_\rho$, and which meets $S_\rho$ at the constant contact angle $\theta$ along both components of $\partial\Sigma_f$.
	
	Write an admissible catenoid band as $\Sigma_f$, where $f(z)>0$ is its radius at height $z$. Minimality, the requirement that the interior lie inside $S_\rho$, and the two contact-angle conditions are respectively
	\begin{align}
		ff''&=(n-1)(1+f'^2),
		\label{eq:band-catenoid-ode}\\
		f(a)&=\rho(a),\quad f(b)=\rho(b),\quad f<\rho\ \text{on }(a,b),
		\label{eq:band-inside}\\
		\arctan\rho'(a)-\arctan f'(a)&=\pi-\theta,\qquad \arctan\rho'(b)-\arctan f'(b)=-(\pi-\theta).
		\label{eq:band-angles}
	\end{align} 
	Differentiating and using the minimality equation, we obtain
	\begin{align*}
		\frac{d}{dz}\left(\frac{f^{n-1}}{\sqrt{1+f'^2}}\right)
		&=\frac{f^{n-2}f'}{(1+f'^2)^{3/2}}
		\bigl((n-1)(1+f'^2)-ff''\bigr)=0.
	\end{align*}
	Therefore
	\begin{equation}
		\frac{f^{n-1}}{\sqrt{1+f'^2}}=c^{n-1}
		\label{eq:catenoid-first-integral}
	\end{equation}
	for some constant $c>0$.
	
	The following lemma shows that a component in the zero relative homology class whose boundary components are horizontal spheres is a catenoid band.
	
	\begin{lemma}
		\label{lem:horizontal-boundary-catenoid}
		Let $0<\theta<\pi$, and let $\Sigma_0\subset\overline{M_\rho}$ be a connected compact embedded minimal $\theta$-capillary hypersurface supported on $S_\rho$. Assume $[\Sigma_0]=0$ and that every component of $\partial\Sigma_0$ is a horizontal sphere. Then $\Sigma_0$ is an admissible catenoid band. If $\theta\neq\pi/2$ and its boundary heights are $a<b$, then
		\[
		A_\theta(b)=B_\theta(a).
		\]
	\end{lemma}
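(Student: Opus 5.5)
Our plan is to show first that $\Sigma_0$ is rotationally symmetric, then read off the catenoid equation and the angle conditions, and finally obtain the profile identity from the first integral \eqref{eq:catenoid-first-integral}.

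\emph{Rotational symmetry.} Each component of $\partial\Sigma_0$ is a horizontal sphere $S_\rho\cap\{z=h\}$. Since $[\Sigma_0]=0$, $W_{\Sigma_0}\subset S_\rho$ is compact with boundary $\partial\Sigma_0$. Hence $W_{\Sigma_0}$ is a finite union of annuli $S_\rho\cap\{a_i\le z\le b_i\}$. Along each boundary sphere, $\Sigma_0$ meets $S_\rho$ at the constant angle $\theta$. Its Cauchy data are therefore invariant under every rotation $R\in SO(n)$ fixing the $z$-axis: the boundary curve is invariant, and by \eqref{eq:normal-conormal-relations} the conormal $\mu_{\Sigma_0}$ is determined by $N_{S_\rho}$, $\mu_{S_\rho}$ and $\theta$, all of which are rotation invariant. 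So $R(\Sigma_0)$ and $\Sigma_0$ are minimal hypersurfaces that share a boundary component and have the same tangent planes along it. Writing both locally as graphs over the common tangent plane near that boundary sphere, they solve the minimal surface equation with the same Cauchy data. Unique continuation for the Cauchy problem (Cauchy--Kovalevskaya/Holmgren, since minimal hypersurfaces are analytic up to an analytic boundary, or Carleman-type unique continuation in the $C^2$ setting) then gives $R(\Sigma_0)=\Sigma_0$ near the boundary, and hence everywhere by connectedness and analyticity.

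We expect this unique-continuation step to be the main obstacle. The support is only $C^2$, so analyticity up to the boundary is unavailable. The plan is to apply unique continuation for the difference of two graph solutions of the linearized (uniformly elliptic) equation with zero Cauchy data on the boundary sphere. If needed, we would extend both solutions slightly across the boundary hypersurface to reduce to interior unique continuation.

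\emph{Identifying the band.} Rotational invariance makes $\Sigma_0$ generated by a profile curve in a half-plane. Since $\Sigma_0$ is compact, connected, embedded, in $\operatorname{int}M_\rho$ away from its boundary, and has boundary circles on $S_\rho$, the profile is an arc joining boundary points. It cannot meet the axis, since then $\Sigma_0$ would contain a disk and be homologous to $D_0$, contradicting $[\Sigma_0]=0$. The only rotational minimal hypersurfaces avoiding the axis are pieces of catenoids. We then check that the arc is a graph $r=f(z)$ over an interval $[a,b]$ with two endpoints and that $f$ satisfies \eqref{eq:band-catenoid-ode}. If the arc turned at the neck so that both ends were at one height, the two boundary spheres would coincide, contradicting embeddedness. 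This gives \eqref{eq:band-inside}, and \eqref{eq:band-angles} follows from \eqref{eq:angle} with the orientations of \eqref{eq:normal-conormal-relations}. Applying the Hopf argument at the top and bottom, as in Proposition~\ref{prop:automatic-separation}, yields $\rho'(b)<-\cot\theta$, hence $b<\ell_\theta$, so the band is admissible.

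\emph{The identity.} From \eqref{eq:band-angles}, $\tan(\arctan\rho'-\arctan f')$ equals $\mp\tan\theta$ at $a$ and $b$ respectively. Write $\phi=\arctan f'$ and $\psi=\arctan\rho'$. At $b$ we have $\phi=\psi+\pi-\theta$, so
\[
\cos\phi=-\cos(\psi-\theta)=-\cos\psi\cos\theta-\sin\psi\sin\theta.
\]
Then
\[
\frac{1}{\sqrt{1+f'^2}}=|\cos\phi|=\frac{|\cos\theta|\,(1+\tan\theta\,\rho')}{\sqrt{1+\rho'^2}},
\]
where the signs are to be fixed using $\phi\in(-\pi/2,\pi/2)$. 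Similarly, at $a$ we get the factor $1-\tan\theta\,\rho'$. Multiplying by $f^{n-1}=\rho^{n-1}$ at each endpoint and using that \eqref{eq:catenoid-first-integral} takes the same value at $a$ and $b$ gives $|\cos\theta|A_\theta(b)=|\cos\theta|B_\theta(a)$. Since $\theta\neq\pi/2$, this yields $A_\theta(b)=B_\theta(a)$. Getting the sign bookkeeping right at the two ends is routine but needs care.
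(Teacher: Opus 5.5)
Your proposal is correct and follows essentially the paper's route: rotational symmetry from unique continuation for the difference $w=u-v$ of the two graphs, which has zero Cauchy data (the paper extends $w$ itself by zero across the flattened boundary and applies Aronszajn's theorem, which is the precise form of your extension step); then identification of the profile via minimality; then $\rho'(b)<-\cot\theta$ for admissibility (the paper reads this off \eqref{eq:band-angles} directly rather than using Hopf); and finally evaluation of \eqref{eq:catenoid-first-integral} at both ends. Two small corrections: horizontal annuli (the case $C=0$ of the paper's first integral $r^{n-1}z_s=C$) are also rotational minimal and avoid the axis, but your same-height argument excludes them; and no absolute values are needed at the end, since $\phi\in(-\pi/2,\pi/2)$ gives $1/\sqrt{1+f'^2}=\cos\phi$ and hence $c^{n-1}=-\cos\theta\,A_\theta(b)=-\cos\theta\,B_\theta(a)$ directly (your $|\cos\theta|(1+\tan\theta\,\rho')$ form is only right for obtuse $\theta$).
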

	
	\begin{proof}
		\smallskip
		\textit{Step 1.}
		We prove that $\Sigma_0$ is rotationally symmetric.
		Let $R$ be a rotation about the vertical axis, and fix a component $\Lambda$ of $\partial\Sigma_0$. Since $\Lambda$ is a horizontal sphere, $R(\Lambda)=\Lambda$. Moreover, \eqref{eq:normal-conormal-relations} gives
		\[
		T_p\Sigma_0=T_p\Lambda\oplus\operatorname{span}\{\sin\theta\,N_{S_\rho}(p)+\cos\theta\,\mu_{S_\rho}(p)\},
		\qquad p\in\Lambda.
		\]
		Thus $R(\Sigma_0)$ and $\Sigma_0$ coincide along $\Lambda$ and have the same tangent spaces there. Near any $p\in\Lambda$, write them as minimal graphs $u$ and $v$ over the same side of their common tangent hyperplane. On their common projected domain, the difference $w=u-v$ satisfies
		\[
		\operatorname{div}(\mathcal A(x)\nabla w)=0,
		\]
		where
		\[
		\mathcal A_{ij}(x)=\int_0^1\left(
		\frac{\delta_{ij}}{\sqrt{1+|q_t|^2}}
		-\frac{(q_t)_i(q_t)_j}{(1+|q_t|^2)^{3/2}}
		\right)dt,
		\qquad q_t=\nabla v+t(\nabla u-\nabla v).
		\]
		The matrix $\mathcal A$ is uniformly elliptic and Lipschitz in a sufficiently small neighborhood. Flatten the projected image of $\Lambda$ by a smooth change of coordinates, and write
		\[
		B^\pm=B\cap\{\pm x_n>0\},
		\qquad \Gamma=B\cap\{x_n=0\}.
		\]
		Denoting the transformed coefficients again by $\mathcal A$, we have  
		\[
		\operatorname{div}(\mathcal A\nabla w)=0\quad\text{in }B^+,
		\qquad w=0,\quad\nabla w=0\quad\text{on }\Gamma.
		\]
		Extend $\mathcal A$ to a uniformly elliptic Lipschitz matrix $\widetilde{\mathcal A}$ on $B$, and define
		\[
		\widetilde w(x)=
		\begin{cases}
		w(x),&x\in B^+,\\
		0,&x\in B^-\cup\Gamma.
		\end{cases}
		\]
		For every $\phi\in C_c^\infty(B)$, if $\eta$ is the outward unit normal of $B^+$ along $\Gamma$, then
		\begin{align*}
		\int_B\langle\widetilde{\mathcal A}\nabla\widetilde w,\nabla\phi\rangle\,dx
		&=\int_{B^+}\langle\mathcal A\nabla w,\nabla\phi\rangle\,dx\\
		&=\int_\Gamma\phi\,\mathcal A\nabla w\cdot\eta\,dS
		-\int_{B^+}\phi\,\operatorname{div}(\mathcal A\nabla w)\,dx=0.
		\end{align*}
		Thus $\widetilde w$ is a weak solution on $B$. Since $\widetilde w=0$ on $B^-$. Applying unique continuation for elliptic equations \cite{Aronszajn}, we see $\widetilde w\equiv0$ on $B$. Hence $R(\Sigma_0)$ and $\Sigma_0$ agree near $\Lambda$. Interior unique continuation and connectedness then give $R(\Sigma_0)=\Sigma_0$ globally. Since $R$ is arbitrary, $\Sigma_0$ is rotationally symmetric.
		
		\smallskip
		\textit{Step 2.}
		We show that the height is strictly monotone along the generating curve.
		The generating curve is an interval; parametrize it by arclength as
		\[
		\gamma(s)=(r(s),z(s)),\qquad \gamma'(s)=(\cos\varphi(s),\sin\varphi(s)).
		\] 
		Choose $\nu_{\Sigma_0}=(-\sin\varphi\,\omega,\cos\varphi)$. The principal curvatures are $\varphi_s$ and $\sin\varphi/r$ with multiplicities $1$ and $n-1$, respectively. 
		Hence minimality gives
		\[
		\varphi_s+(n-1)\frac{\sin\varphi}{r}=0.
		\]
		Since $z_{ss}=\cos\varphi\,\varphi_s=r_s\varphi_s$, we have
		\[
		\frac{d}{ds}\left(r^{n-1}z_s\right)
		=r^{n-2}r_s\bigl((n-1)z_s+r\varphi_s\bigr)=0.
		\]
	Hence
	\[
	r^{n-1}z_s=C
	\]
	for some constant $C$. If $C=0$, then $z_s=0$ wherever $r>0$. Since the curve is parametrized by arclength, $|r_s|=1$, so the generating curve is a horizontal radial segment and $\Sigma_0$ is a horizontal slice, contradicting $[\Sigma_0]=0$. Thus $C\neq0$, and in particular the curve does not meet the rotation axis. Reversing $s$ if necessary, write $C=c^{n-1}$ with $c>0$. Then
	\[
	z_s=\frac{c^{n-1}}{r^{n-1}}>0,
	\]
	so $z$ is a global parameter. If $a<b$ are the boundary heights, the curve can be written as $r=f(z)$ for $a\leq z\leq b$. Minimality, the inclusion $\Sigma_0\subset\overline{M_\rho}$, and the contact-angle condition yield \eqref{eq:band-catenoid-ode}, \eqref{eq:band-inside}, and \eqref{eq:band-angles}, respectively.
		
		\smallskip
		\textit{Step 3.}
		We verify that the rotational graph is an admissible catenoid band.
		By \eqref{eq:band-angles},
		\[
		\arctan f'(b)=\arctan\rho'(b)+\pi-\theta<\frac{\pi}{2},
		\]
		so
		\[
		\rho'(b)<\tan\left(\theta-\frac{\pi}{2}\right)=-\cot\theta.
		\]
		By continuity, $\rho'<-\cot\theta$ for $z>b$ sufficiently close to $b$, and hence $b<\ell_\theta$.  Therefore $\Sigma_0$ is an admissible catenoid band.
		
		\smallskip
		\textit{Step 4.}
		We prove $A_\theta(b)=B_\theta(a)$ when $\theta\neq\pi/2$.
		From \eqref{eq:band-angles},
		\begin{align*}
			\frac1{\sqrt{1+f'(a)^2}}
			&=\cos\bigl(\arctan\rho'(a)-(\pi-\theta)\bigr)\\
			&=\frac{-\cos\theta+\rho'(a)\sin\theta}{\sqrt{1+\rho'(a)^2}}
			=-\cos\theta\frac{1-\tan\theta\,\rho'(a)}{\sqrt{1+\rho'(a)^2}},\\
			\frac1{\sqrt{1+f'(b)^2}}
			&=\cos\bigl(\arctan\rho'(b)+(\pi-\theta)\bigr)\\
			&=\frac{-\cos\theta-\rho'(b)\sin\theta}{\sqrt{1+\rho'(b)^2}}
			=-\cos\theta\frac{1+\tan\theta\,\rho'(b)}{\sqrt{1+\rho'(b)^2}}.
		\end{align*}
		Using $f(a)=\rho(a)$ and $f(b)=\rho(b)$ in \eqref{eq:catenoid-first-integral}, we obtain
		\[
		c^{n-1}=-\cos\theta\,B_\theta(a)=-\cos\theta\,A_\theta(b).
		\]
		Since $\cos\theta\neq0$, it follows that
		\[
		A_\theta(b)=B_\theta(a).
		\]
	\end{proof}

	We now show that the hypotheses of Theorem~\ref{thm:intro-band-rigidity} force every boundary component to be a horizontal sphere.
	
	\begin{proof}[Proof of Theorem~\ref{thm:intro-band-rigidity}] 
		We first use \eqref{eq:main-nonnegative-profile} and Proposition~\ref{prop:bottom-disk-rigidity} to prove that $W_{\Sigma_0}\subset S_\rho$ for every component $\Sigma_0$ of $\Sigma$. For each such $\Sigma_0$, we then combine Proposition~\ref{prop:integral-identity} with \eqref{eq:main-nonnegative-profile} and \eqref{eq:no-cylinder-below-ell} to prove that every component of $\partial\Sigma_0$ is a horizontal sphere.
		
		Recall that
		\[
		J=\frac{\rho^{n-1}}{\sqrt{1+\rho'^2}}.
		\]
		Fix $a\in(0,L)$ and let $b\downarrow a$ in \eqref{eq:main-nonnegative-profile}. By continuity,
		\[
		0\leq A_\theta(a)-B_\theta(a)
		=2J(a)\tan\theta\,\rho'(a).
		\]
		Since $J>0$ and $\tan\theta<0$, we obtain $\rho'\leq0$ on $(0,L)$. As $-\cot\theta>0$, it follows that $\rho'<-\cot\theta$ on $(0,L)$, and hence $\ell_\theta=L$. In particular, \eqref{eq:slope-condition} holds automatically.
		
		Suppose that $D_0\subset W_{\Sigma_0}$ for some component $\Sigma_0$ of $\Sigma$. Proposition~\ref{prop:bottom-disk-rigidity} then implies that $\Sigma_0$ is a horizontal slice. Such a $n$-ball can meet $S_\rho$ at angle $\theta$ only at a height where $\rho'=-\cot\theta>0$, contradicting $\rho'\leq0$. Therefore $W_{\Sigma_0}\subset S_\rho$, equivalently,
		\[
		[\Sigma_0]=0
		\qquad\text{for every component $\Sigma_0$ of $\Sigma$}.
		\]
		
		Fix a component $\Sigma_0$ of $\Sigma$. We first show that every component of $\partial\Sigma_0$ is a horizontal sphere. Let $E$ be a component of $W_{\Sigma_0}$, and set
		\[
		\alpha=\min_E z,\qquad \beta=\max_E z,\qquad K_+(z)=\max_{\alpha\leq s\leq z}B_\theta(s),\qquad X_+=\frac{1-K_+/J}{-\tan\theta}\nabla^{S_\rho}z.
		\]
		Since $B_\theta\in C^1([\alpha,\beta])$, the function $K_+$ is Lipschitz and nondecreasing. By definition, $B_\theta(z)\leq K_+(z)$. For $\alpha\leq s<z\leq\beta$, \eqref{eq:main-nonnegative-profile} gives $B_\theta(s)\leq A_\theta(z)$, while for $s=z$,
		\[
		A_\theta(z)-B_\theta(z)=2J(z)\tan\theta\,\rho'(z)\geq0.
		\]
		Hence
		\[
		B_\theta(z)\leq K_+(z)\leq A_\theta(z).
		\]
		These inequalities are the reverse of those used in the proof of Proposition~\ref{prop:automatic-separation}. The same calculation, with $K_+$ in place of $K_-$, gives
		\[
		|X_+|\leq\frac{-\rho'}{\sqrt{1+\rho'^2}}.
		\]
		Let $\mu_E$ denote the outward unit conormal of $\partial E$ in $E$. Then
		\[
		\langle X_+,\mu_E\rangle\geq-|X_+|\geq\frac{\rho'}{\sqrt{1+\rho'^2}}
		\qquad\text{on }\partial E.
		\]
		A similar computation gives, almost everywhere,
		\[
		\Delta_{S_\rho}z+\tan\theta\,\operatorname{div}_{S_\rho}X_+
		=\frac{K_+'}{\rho^{n-1}\sqrt{1+\rho'^2}}\geq0,
		\]
		since $K_+$ is nondecreasing. Integrating over $E$ and applying the divergence theorem gives
		\begin{align}
			Q_E
			&:=\int_E\Delta_{S_\rho}z\,dA+\tan\theta\int_{\partial E}\frac{\rho'}{\sqrt{1+\rho'^2}}\,ds\nonumber\\
			&=\underbrace{\int_E\left(\Delta_{S_\rho}z+\tan\theta\,\operatorname{div}_{S_\rho}X_+\right)dA}_{=:I_E}
			+\underbrace{\tan\theta\int_{\partial E}\left(\frac{\rho'}{\sqrt{1+\rho'^2}}-\langle X_+,\mu_E\rangle\right)ds}_{=:B_E}\geq0.
		\end{align}
		The preceding inequalities and $\tan\theta<0$ give $I_E\geq0$ and $B_E\geq0$. Summing over all components $E$ of $W_{\Sigma_0}$, Proposition~\ref{prop:integral-identity} gives
		\[
		\sum_E Q_E=0.
		\]
		Hence $Q_E=0$ for every $E$, and therefore
		\[
		I_E=B_E=0.
		\]
		
		The integrand defining $B_E$ is continuous and nonnegative. Since $B_E=0$, it vanishes identically. Equality in the estimates used above therefore gives
		\[
		\langle X_+,\mu_E\rangle=-|X_+|=\frac{\rho'}{\sqrt{1+\rho'^2}}
		\qquad\text{on }\partial E.
		\]
		Thus, at any $p\in\partial E$ with $\rho'(z(p))\neq0$, we have
		\[
		\mu_E=-\frac{X_+}{|X_+|}\parallel\nabla^{S_\rho}z.
		\]
		Since $\mu_E\perp T_p\partial E$,
		\[
		dz|_{T_p\partial E}=0.
		\]
		
		Let $\Lambda$ be a component of $\partial\Sigma_0$ and suppose that $z|_\Lambda$ is nonconstant. By Sard's theorem, the regular values of $z|_\Lambda$ are dense in $(\min_\Lambda z,\max_\Lambda z)$. For such a regular value $t$,  there is $p\in\Lambda$ with $z(p)=t$ and $dz|_{T_p\Lambda}\neq0$, hence $\rho'(t)=0$. By continuity,
		\[
		\rho'\equiv0\qquad\text{on }[\min_\Lambda z,\max_\Lambda z],
		\]
		contradicting \eqref{eq:no-cylinder-below-ell}. Thus $z$ is constant on every component $\Lambda$ of $\partial\Sigma_0$. Since $\Lambda$ is open and closed in the horizontal sphere containing it, $\Lambda$ is the entire sphere.
		
		Lemma~\ref{lem:horizontal-boundary-catenoid} implies that $\Sigma_0$ is an admissible catenoid band. If its boundary heights are $a<b$, then
		\[
		A_\theta(b)=B_\theta(a).
		\]
		Since $\Sigma_0$ is arbitrary, the theorem follows.
	\end{proof}

	\appendix
	
	\section{A catenoid-band example}
	\label{app:equality-model}
	
	This appendix shows that \eqref{eq:two-point-barrier} cannot in general be omitted from Theorem~\ref{thm:intro-disk-rigidity}(iii) for $\pi/2<\theta<\pi$. We construct a smooth one-ended rotational support satisfying \eqref{eq:slope-condition} and carrying an admissible catenoid band with boundary heights $a<b$ such that
	\[
	A_\theta(b)=B_\theta(a).
	\] 
	Consequently, the conclusion of Theorem~\ref{thm:intro-disk-rigidity}(iii) may fail if \eqref{eq:two-point-barrier} is omitted, even when \eqref{eq:slope-condition} holds.
	
	\begin{proposition}
		\label{prop:profile-equality-nonrigid}
		For every $n\geq2$ and $\pi/2<\theta<\pi$, there exists a smooth one-ended rotational support $S_\rho$ without cylindrical intervals such that
		\[
		0<\rho'<-\cot\theta\qquad\text{on }[0,\infty),
		\]
		and $S_\rho$ supports an admissible catenoid band with boundary heights $a<b$ satisfying
		\[
		A_\theta(b)=B_\theta(a).
		\]
	\end{proposition}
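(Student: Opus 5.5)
The plan is to build the support around a given catenoid band, rather than the band inside a given support. Put $\varphi:=\pi-\theta\in(0,\pi/2)$, so the required slope range is $0<\rho'<-\cot\theta=\cot\varphi$, i.e. $\arctan\rho'\in(0,\pi/2-\varphi)$.

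\emph{Step 1: the catenoid.} Take a rotational catenoid profile $f$ solving \eqref{eq:band-catenoid-ode}, normalized by \eqref{eq:catenoid-first-integral} with $c=1$, with its neck at height $z_0$. Then $f''>0$, so $f'$ is strictly increasing, negative below the neck and positive above it, and $f'$ takes every real value. Fix small $0<\varepsilon<\varepsilon'$ and choose heights $a<b$ by
\[
\arctan f'(a)=-\varphi+\varepsilon,\qquad \arctan f'(b)=\varphi+\varepsilon'.
\]
The angle conditions \eqref{eq:band-angles} then prescribe $\arctan\rho'(a)=\varepsilon$ and $\arctan\rho'(b)=\varepsilon'$. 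Both lie in $(0,\pi/2-\varphi)$ once $\varepsilon'$ is small. Translating in $z$ makes $a>0$; the required size of $a$ is fixed in Step 3.

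\emph{Step 2: the profile on $[a,b]$.} We need $\rho$ with $\rho(a)=f(a)$, $\rho(b)=f(b)$, the prescribed endpoint slopes, $0<\rho'<\cot\varphi$, and $\rho>f$ on $(a,b)$. The average slope is $m:=(f(b)-f(a))/(b-a)$. For $\varepsilon=\varepsilon'$ the catenoid is symmetric about its neck, so $m=0$. Taking $\varepsilon'>\varepsilon$ makes $f(b)>f(a)$, so $m>0$, and $m$ is small if $\varepsilon,\varepsilon'$ are small; in particular $m<\cot\varphi$. Now take $\rho'$ smooth, equal to the constant $m$ away from the endpoints and interpolating monotonically to $\tan\varepsilon$ near $a$ and to $\tan\varepsilon'$ near $b$. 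Correct the integral by a small bump kept inside $(0,\cot\varphi)$ so that $\int_a^b\rho'=f(b)-f(a)$.

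To get $\rho>f$, arrange $\rho'-f'$ to be positive and then negative with a single sign change. This holds because $f'$ increases from about $-\tan\varphi$ to about $\tan\varphi$ while $\rho'$ stays within the small band $[\min(m,\tan\varepsilon),\max(m,\tan\varepsilon')]$, provided the interpolation layers are chosen so that $\rho'>f'$ near $a$ and $\rho'<f'$ near $b$. Then $\rho-f$ vanishes at $a$ and $b$, increases and then decreases, and so is positive on $(a,b)$. This single-sign-change bookkeeping, together with the endpoint-slope matching, is the step I expect to be the main obstacle.

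\emph{Step 3: extension and conclusion.} Extend $\rho$ below $a$ down to $z=0$ with $\rho'$ positive and small, for example a constant slope $s\in(0,\cot\varphi)$ after smoothing. This keeps $\rho$ positive on $[0,a]$: choose $s<f(a)/a$, or equivalently translate so that $a$ is small. Above $b$, extend with $\rho'$ eventually a positive constant below $\cot\varphi$, which makes the meridian proper and complete and the support one-ended. Since $\rho'>0$ everywhere, there are no cylindrical intervals. Since $\rho'<-\cot\theta$ everywhere, the hypothesis of \eqref{eq:slope-condition} never holds, so \eqref{eq:slope-condition} is satisfied vacuously, and $\ell_\theta=\infty$.

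The band $\Sigma_f$ over $[a,b]$ then satisfies \eqref{eq:band-catenoid-ode}, \eqref{eq:band-inside} and \eqref{eq:band-angles}, and $0<a<b<\ell_\theta$, so it is an admissible catenoid band. The identity $A_\theta(b)=B_\theta(a)$ follows verbatim from Step 4 in the proof of Lemma~\ref{lem:horizontal-boundary-catenoid}, which only uses \eqref{eq:band-angles}, the boundary matching $f(a)=\rho(a)$, $f(b)=\rho(b)$, and the first integral \eqref{eq:catenoid-first-integral}.
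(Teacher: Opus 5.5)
Your construction is the paper's. Your $\varepsilon,\varepsilon'$ play the role of its $u,v$. The paper likewise sets $\rho'=\tan u$ near $a$ and $\rho'=\tan v$ near $b$, keeps the slope close to the chord slope $m$ in between, restores $\int_a^b\rho'=f(b)-f(a)$ with a small bump, extends linearly, and reads off $A_\theta(b)=B_\theta(a)$ from the first integral exactly as in Step~4 of Lemma~\ref{lem:horizontal-boundary-catenoid}.

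The one real difference is the proof of $\rho>f$ on $(a,b)$. The paper uses $\rho'>0>f'$ near $a$ and $\rho'<f'$ near $b$. On the remaining compact middle interval it uses that the chord lies strictly above the strictly convex graph of $f$, while $\rho$ converges uniformly to the chord as the transition layers shrink. That argument does not care where the smoothing and the bump sit, but it is asymptotic. Your unimodality argument for $\rho-f$ is direct, but as written it does not yet give a single sign change of $\rho'-f'$. Confining $\rho'$ to a small band $[\Lambda_-,\Lambda_+]$ only localizes the possible zeros of $\rho'-f'$ to the short interval $\{\Lambda_-\le f'\le\Lambda_+\}$ just above the neck. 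A correcting bump placed there could produce several crossings.

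You need one more stipulation, and it is easy to arrange:
\begin{enumerate}
\item Put the layer at $a$ and the bump inside $\{f'<0\}$. There $\rho'-f'>0$ automatically.
\item Then on $\{f'\ge0\}$ you have $\rho'\equiv m$ up to the layer at $b$. So $\rho'-f'=m-f'$ is strictly decreasing there and positive at the neck.
\item On the layer at $b$, $\rho'-f'$ is negative, because $\rho'$ is small and $f'$ is close to $\tan(\varphi+\varepsilon')$.
\end{enumerate}
Hence $\rho'-f'$ changes sign exactly once. Alternatively, the integral defect tends to $0$ as the layers shrink, so you can scale a fixed bump until its derivative stays below $\min_{[a,b]}f''>0$.

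Separately, your claim that the configuration is symmetric about the neck when $\varepsilon=\varepsilon'$ is false. In that case $|f'(a)|=\tan(\varphi-\varepsilon)<\tan(\varphi+\varepsilon)=f'(b)$, so $b$ lies farther from the neck than $a$. The symmetric case is $\varepsilon=\varepsilon'=0$. In fact $f(b)>f(a)$ for all $\varepsilon,\varepsilon'>0$; this is the paper's observation that $a$ moves toward the neck and $b$ away from it. So the restriction $\varepsilon<\varepsilon'$ is unnecessary. Your conclusions $m>0$ and $m\to0$ (by continuity from $\varepsilon=\varepsilon'=0$) still hold, and nothing downstream is affected.
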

	
	\begin{proof}
		Let $f$ be the radial function of an $n$-dimensional catenoid with neck at height zero. Then $f$ is even, $f'$ is odd, and $f'$ is strictly increasing. Choose
		\[
		0<u<\pi-\theta,\qquad 0<v<\theta-\frac{\pi}{2}
		\]
		sufficiently small. Choose heights $a<b$ on opposite sides of the neck such that
		\[
		\arctan f'(a)=-(\pi-\theta)+u,\qquad
		\arctan f'(b)=\pi-\theta+v.
		\]
		When $u=v=0$, the two heights are symmetric. For $u,v>0$, the lower height moves toward the neck and the upper height moves away from it. Hence $f(b)>f(a)$.
		Set
		\[
		m=\frac{f(b)-f(a)}{b-a}.
		\]
		Then $m>0$, and
		\[
		m\longrightarrow0
		\qquad\text{as }u,v\downarrow0.
		\]
		Since $-\cot\theta>0$, we may choose $u,v$ so that
		\[
		0<m<-\cot\theta,
		\qquad
		0<\tan u,\tan v<-\cot\theta.
		\]
		The choices of $a$ and $b$ also give
		\[
		f'(a)<0<\tan u,\qquad 0<\tan v<f'(b).
		\]
		After a vertical translation, assume $a>0$, with $a$ as small as needed below.
		
		For small $\varepsilon>0$, set
		\[
		s_\varepsilon(z)=
		\begin{cases}
			\tan u,&a\leq z\leq a+\varepsilon,\\
			m_\varepsilon,&a+\varepsilon<z<b-\varepsilon,\\
			\tan v,&b-\varepsilon\leq z\leq b,
		\end{cases}
		\qquad
		m_\varepsilon=\frac{m(b-a)-\varepsilon(\tan u+\tan v)}{b-a-2\varepsilon}.
		\]
		Then
		\begin{align*}
			\int_a^b s_\varepsilon(z)\,dz
			&=\varepsilon\tan u+(b-a-2\varepsilon)m_\varepsilon+\varepsilon\tan v\\
			&=\varepsilon(\tan u+\tan v)
			+m(b-a)-\varepsilon(\tan u+\tan v)\\
			&=m(b-a)  =f(b)-f(a).
		\end{align*}
		Moreover, $m_\varepsilon\to m$ as $\varepsilon\downarrow0$.
		For sufficiently small $\varepsilon$, all three values of $s_\varepsilon$ lie in a compact subinterval of $(0,-\cot\theta)$. Smooth the two jumps in disjoint intervals of length $\eta$, without changing $s_\varepsilon$ near $a$ or $b$, and denote the resulting function by $\widetilde s$. It may be chosen with the same upper and lower bounds as $s_\varepsilon$, and
		\[
		d_\eta:=f(b)-f(a)-\int_a^b\widetilde s(z)\,dz\longrightarrow0
		\qquad\text{as }\eta\downarrow0.
		\]
		Choose $\psi\in C_c^\infty((a,b))$ supported away from the two smoothed intervals, with
		\[
		\int_a^b\psi\,dz=1.
		\]
		Then
		\[
		s=\widetilde s+d_\eta\psi
		\]
		is smooth and has the prescribed integral. For sufficiently small $\eta$, the correction does not leave $(0,-\cot\theta)$. Thus
		\[
		s=\tan u,\ \ \text{near }a,\qquad s=\tan v,\ \ \text{near }b,\qquad
		0<s<-\cot\theta,
		\]
		and
		\[
		\int_a^b s(z)\,dz=f(b)-f(a).
		\]
		Choose $\eta=\eta(\varepsilon)$ so small that
		\[
		\|s-s_\varepsilon\|_{L^1(a,b)}\leq\varepsilon.
		\]
		
		Define
		\[
		\rho(z)=f(a)+\int_a^z s(\zeta)\,d\zeta
		\qquad(a\leq z\leq b).
		\]
		Then $\rho(a)=f(a)$ and $\rho(b)=f(b)$. Since $f'(a)<0<s$ near $a$ and $s<f'(b)$ near $b$, we have $\rho>f$ near $a$ and $b$. On the remaining compact subinterval, the chord joining $(a,f(a))$ and $(b,f(b))$ has positive distance from the strictly convex graph of $f$, while $\rho$ converges uniformly to this chord as $\varepsilon\downarrow0$. Hence, for sufficiently small $\varepsilon$,
		\[
		\rho(z)>f(z)\qquad(a<z<b).
		\]
		
		Since $s$ is constant near $a$ and $b$, extend $\rho$ smoothly by
		\[
		\rho(z)=f(a)+\tan u\,(z-a)\qquad(0\leq z\leq a),
		\]
		and
		\[
		\rho(z)=f(b)+\tan v\,(z-b)\qquad(z\geq b).
		\]
		Taking $a$ sufficiently small gives $\rho(0)=f(a)-a\tan u>0$. By construction,
		\[
		0<\rho'<-\cot\theta\qquad\text{on }[0,\infty).
		\]
		Thus $S_\rho$ has no cylindrical intervals. Moreover, there is no height $a$ with $\rho'(a)>-\cot\theta$, so \eqref{eq:slope-condition} holds. Since $\rho'\equiv\tan v>0$ for $z\geq b$, the support is complete, proper, one-ended, and has no upper cap.
		
		It remains to verify the contact angle. Since $\rho'(a)=\tan u$ and $\rho'(b)=\tan v$,
		\begin{align*}
			\arctan\rho'(a)-\arctan f'(a)
			&=u-\bigl(-(\pi-\theta)+u\bigr)=\pi-\theta,\\
			\arctan\rho'(b)-\arctan f'(b)
			&=v-\bigl(\pi-\theta+v\bigr)=-(\pi-\theta).
		\end{align*}
		Together with $\rho=f$ at $a,b$ and $\rho>f$ on $(a,b)$, this shows that the catenoid segment is an admissible $\theta$-capillary catenoid band.
		
		Finally, let $c$ be the constant in \eqref{eq:catenoid-first-integral}. Then
		\begin{align*}
			\frac1{\sqrt{1+f'(a)^2}}
			&=\cos\bigl(\arctan\rho'(a)-(\pi-\theta)\bigr) =-\cos\theta\,
			\frac{1-\tan\theta\,\rho'(a)}{\sqrt{1+\rho'(a)^2}},\\
			\frac1{\sqrt{1+f'(b)^2}}
			&=\cos\bigl(\arctan\rho'(b)+(\pi-\theta)\bigr) =-\cos\theta\,
			\frac{1+\tan\theta\,\rho'(b)}{\sqrt{1+\rho'(b)^2}}.
		\end{align*}
		Using $f(a)=\rho(a)$ and $f(b)=\rho(b)$ in \eqref{eq:catenoid-first-integral},
		\[
		c^{n-1}=-\cos\theta\,B_\theta(a)
		=-\cos\theta\,A_\theta(b).
		\]
		Since $\cos\theta\neq0$, we obtain
		\[
		A_\theta(b)=B_\theta(a).
		\]
	\end{proof}

	\section{Equivalent conditions on the support profile}
	\label{app:angle-uniform}
	
	This appendix characterizes the support profiles for which \eqref{eq:two-point-barrier} holds for every obtuse contact angle. Assume $\rho'>0$. For $0<\theta<\pi/2$, one has $\ell_\theta=0$, so there are no heights $0<a<b<\ell_\theta$; moreover, \eqref{eq:slope-condition} holds automatically for $0<\theta\leq\pi/2$. It therefore remains to consider $\pi/2<\theta<\pi$. Set
	\[
	J=\frac{\rho^{n-1}}{\sqrt{1+\rho'^2}}.
	\]
	
	The following proposition replaces the family of strict inequalities \eqref{eq:two-point-barrier}, one for each obtuse contact angle, by a single differential inequality for $\rho$. It also shows that this differential inequality implies \eqref{eq:slope-condition} for every contact angle.

	\begin{proposition}
		\label{prop:angle-uniform-limit}
		Assume $\rho'>0$. Then
		\[
		\begin{aligned}
			&\eqref{eq:two-point-barrier}\text{ holds for every }\pi/2<\theta<\pi\\
			&\quad\Longleftrightarrow\quad
			J'\leq0
			\quad\Longleftrightarrow\quad
			\Delta_{S_\rho}z\leq0
			\quad\Longleftrightarrow\quad
			\rho\rho''\geq(n-1)(1+\rho'^2).
		\end{aligned}
		\]
		Each of these conditions implies \eqref{eq:slope-condition} for every $0<\theta<\pi$. In addition,
		\[
		J'\equiv0
		\quad\Longleftrightarrow\quad
		\rho\rho''=(n-1)(1+\rho'^2),
		\]
		which is the rotational minimal hypersurface equation.
	\end{proposition}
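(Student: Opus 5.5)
The plan is to reduce everything to the sign of $J'$ through one derivative computation, and then handle the two-point family \eqref{eq:two-point-barrier} separately in each direction. Throughout, set $\pi/2<\theta<\pi$ and $t:=-\tan\theta>0$. Then $-\cot\theta=1/t>0$, and
\[
A_\theta=J(1-t\rho'),\qquad B_\theta=J(1+t\rho').
\]

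\emph{Differential equivalences.} Exactly as in the first-integral computation for $f$ before Lemma~\ref{lem:horizontal-boundary-catenoid}, differentiating gives
\[
J'=\frac{\rho^{n-2}\rho'}{(1+\rho'^2)^{3/2}}\bigl((n-1)(1+\rho'^2)-\rho\rho''\bigr).
\]
The proof of Proposition~\ref{prop:automatic-separation} already contains the formula $\Delta_{S_\rho}z=J'/(\rho^{n-1}\sqrt{1+\rho'^2})$. Since $\rho>0$ and $\rho'>0$, both prefactors are positive. This gives $J'\le 0\Leftrightarrow\Delta_{S_\rho}z\le0\Leftrightarrow\rho\rho''\ge(n-1)(1+\rho'^2)$. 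It also gives $J'\equiv0\Leftrightarrow\rho\rho''=(n-1)(1+\rho'^2)$, which is the equation \eqref{eq:band-catenoid-ode}.

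\emph{$J'\le0$ implies \eqref{eq:two-point-barrier} for all obtuse $\theta$.} Take $0<a<b$. Using $\rho'>0$ and monotonicity of $J$, we get the strict chain
\[
A_\theta(b)=J(b)(1-t\rho'(b))<J(b)\le J(a)<J(a)(1+t\rho'(a))=B_\theta(a).
\]
This holds for all $a<b$, and in particular for $b<\ell_\theta$.

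\emph{Converse, the main obstacle.} Assume \eqref{eq:two-point-barrier} holds for every obtuse $\theta$, and suppose $J'(c)>0$ for some $c\in(0,L)$. Then there are $a<b$ near $c$ with $J(b)>J(a)$. The difficulty is that the inequality is only imposed below $\ell_\theta$, so we must choose $\theta$ with $\ell_\theta>b$. To do this, let $\theta\uparrow\pi$, so that $t\downarrow0$ and $-\cot\theta=1/t\to\infty$. Because $\rho'$ is bounded on $[b,b+\delta]$ for some $\delta>0$ with $b+\delta<L$, for $t$ small we have $\rho'<1/t$ on that interval, hence $\ell_\theta\ge b+\delta>b$. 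As $t\to0$,
\[
A_\theta(b)-B_\theta(a)=J(b)-J(a)-t\bigl(J(b)\rho'(b)+J(a)\rho'(a)\bigr)\longrightarrow J(b)-J(a)>0.
\]
So for $\theta$ close to $\pi$ this contradicts \eqref{eq:two-point-barrier}. Hence $J'\le0$ on $(0,L)$, and by continuity on $[0,L)$.

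\emph{Slope condition.} If $\rho\rho''\ge(n-1)(1+\rho'^2)>0$, then $\rho''>0$, so $\rho'$ is strictly increasing. Thus $\rho'(a)>-\cot\theta$ and $b>a$ give $\rho'(b)>\rho'(a)>-\cot\theta$, so \eqref{eq:slope-condition} holds for every $0<\theta<\pi$.
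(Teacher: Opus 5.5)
Your proposal is correct and follows essentially the same route as the paper. The factorization of $J'$ gives the three differential equivalences, and $\rho''>0$ gives the slope condition; the chain $A_\theta(b)<J(b)\le J(a)<B_\theta(a)$ gives one direction; and the converse sends $\theta\uparrow\pi$ (i.e.\ $t=-\tan\theta\downarrow0$), so that $b<\ell_\theta$ while $A_\theta(b)-B_\theta(a)\to J(b)-J(a)>0$. The only cosmetic difference is that you start from a point with $J'(c)>0$, whereas the paper starts directly from a pair $a<b$ with $J(b)>J(a)$.
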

	
	\begin{proof}
		Suppose first that $J'\leq0$, and fix $\pi/2<\theta<\pi$. Since $\tan\theta<0$ and $\rho'>0$,
		\[
		A_\theta<J<B_\theta
		\]
		at every height. For $0<a<b<\ell_\theta$, the function $J$ is nonincreasing, and hence
		\[
		A_\theta(b)<J(b)\leq J(a)<B_\theta(a),
		\]
		which proves \eqref{eq:two-point-barrier} for every $\pi/2<\theta<\pi$.

		Conversely, assume that \eqref{eq:two-point-barrier} holds for every $\pi/2<\theta<\pi$. If $J(b)>J(a)$ for some $0<a<b<L$, then for all sufficiently small $t>0$,
		\[
		J(b)(1-t\rho'(b))>J(a)(1+t\rho'(a)),\qquad \rho'(b)<\frac1t.
		\]
		Set $\theta=\pi-\arctan t$. Then $\tan\theta=-t$ and $-\cot\theta=1/t$. By continuity, $\rho'<1/t$ immediately above $b$, so $b<\ell_\theta$. Hence \eqref{eq:two-point-barrier} applies to $a<b$, but
		\[
		A_\theta(b)=J(b)(1-t\rho'(b))
		>J(a)(1+t\rho'(a))=B_\theta(a),
		\]
		a contradiction. Therefore $J$ is nonincreasing, and $J'\leq0$.
		
		Next,
		\[
		\Delta_{S_\rho}z
		=\frac{1}{\rho^{n-1}\sqrt{1+\rho'^2}}
		\frac{d}{dz}\left(\frac{\rho^{n-1}}{\sqrt{1+\rho'^2}}\right)
		=\frac{J'}{\rho^{n-1}\sqrt{1+\rho'^2}},
		\]
		so $J'\leq0$ is equivalent to $\Delta_{S_\rho}z\leq0$. Also,
		\[
		\frac{J'}{J}
		=\frac{(n-1)\rho'}{\rho}-\frac{\rho'\rho''}{1+\rho'^2}
		=\frac{\rho'}{\rho(1+\rho'^2)}
		\bigl((n-1)(1+\rho'^2)-\rho\rho''\bigr).
		\]
		Since $\rho'>0$,
		\[
		J'\leq0
		\quad\Longleftrightarrow\quad
		\rho\rho''\geq(n-1)(1+\rho'^2).
		\]
		The latter implies $\rho''>0$, and hence \eqref{eq:slope-condition} holds for every $0<\theta<\pi$.
		Finally, the formula for $J'/J$ gives
		\[
		J'\equiv0
		\quad\Longleftrightarrow\quad
		\rho\rho''=(n-1)(1+\rho'^2).
		\]
	\end{proof}

\end{document}